\newenvironment{customthm}[1]
  {\innercustomthm}
  {\endinnercustomthm}
\newtheorem*{thm*}{Theorem}
\newtheorem{thm}{Theorem}
\newtheorem{lem}[thm]{Lemma}
\newtheorem{conj}[thm]{Conjecture}
\newcommand{\N}{\mathbb{N}}
\begin{document}

\title{Total Equitable List Coloring}

\author{Hemanshu Kaul\footnote{Department of Applied Mathematics, Illinois Institute of Technology, Chicago, IL 60616. E-mail: {\tt kaul@math.iit.edu}}, Jeffrey A. Mudrock\footnote{Department of Applied Mathematics, Illinois Institute of Technology, Chicago, IL 60616. E-mail: {\tt jmudrock@hawk.iit.edu}}, and Michael J. Pelsmajer\footnote{Department of Applied Mathematics, Illinois Institute of Technology, Chicago, IL 60616. E-mail: {\tt pelsmajer@iit.edu}}}


\maketitle

\begin{abstract}
An equitable coloring is a proper coloring of a graph such that the sizes of the color classes differ by at most one.  A graph $G$ is equitably $k$-colorable if there exists an equitable coloring of $G$ which uses $k$ colors, each one appearing on either $\lfloor |V(G)|/k \rfloor$ or $\lceil |V(G)|/k \rceil$ vertices of $G$. In 1994, Fu conjectured that for any simple graph $G$, the total graph of $G$, $T(G)$, is equitably $k$-colorable whenever $k \geq \max\{\chi(T(G)), \Delta(G)+2\}$ where $\chi(T(G))$ is the chromatic number of the total graph of $G$ and $\Delta(G)$ is the maximum degree of $G$.
We investigate the list coloring analogue.  List coloring requires each vertex $v$ to be colored from a specified list $L(v)$ of colors.  A graph is $k$-choosable if it has a proper list coloring whenever vertices have lists of size $k$.  A graph is equitably $k$-choosable if it has a proper list coloring whenever vertices have lists of size $k$, where each color is used on at most $\lceil |V(G)|/k \rceil$ vertices.
In the spirit of Fu's conjecture, we conjecture that for any simple graph $G$,  $T(G)$ is equitably $k$-choosable whenever $k \geq \max\{\chi_l(T(G)), \Delta(G)+2\}$ where $\chi_l(T(G))$ is the list chromatic number of $T(G)$.  We prove this conjecture for all graphs satisfying $\Delta(G) \leq 2$ while also studying the related question of the equitable choosability of powers of paths and cycles.
\medskip

\noindent {\bf Keywords.} graph coloring, total coloring, equitable coloring, list coloring, equitable choosability.

\noindent \textbf{Mathematics Subject Classification.} 05C15.

\end{abstract}

\section{Introduction}\label{intro}

In this paper all graphs are finite, simple graphs unless otherwise noted.  Generally speaking we follow West~\cite{W01} for terminology and notation.

\subsection{Equitable Coloring and Total Coloring}\label{ec1}

An \emph{equitable $k$-coloring} of a graph $G$ is a proper $k$-coloring of $G$, $f$, such that the sizes of the color classes differ by at most one (where a $k$-coloring has exactly $k$ color classes).  It is easy to see that for an equitable $k$-coloring, the color classes associated with the coloring are each of size $\lceil |V(G)|/k \rceil$ or $\lfloor |V(G)|/k \rfloor$.  We say that a graph $G$ is \emph{equitably $k$-colorable} if there exists an equitable $k$-coloring of $G$.
Equitable coloring has found many applications, see for example~\cite{T73}, \cite{P01}, \cite{KJ06}, and~\cite{JR02}.

Unlike ordinary graph coloring, increasing the number of colors may make equitable coloring more difficult.
Thus, for each graph $G$ we have the \emph{equitable chromatic number} $\chi_=(G)$, the minimum $k$ for which there exists an equitable $k$-coloring, and the \emph{equitable chromatic threshold} $\chi_{=}^*(G)$, the minimum $k$ for which $G$ is equitably $j$-colorable for all $j \geq k$.  For example, $K_{2m+1,2m+1}$ is equitably $k$-colorable for $k\ge 2m+2$ and for even integers $k$ less than $2m+1$, but it is not equitably $(2m+1)$-colorable~\cite{LW96}, so $\chi_{=}^*(K_{2m+1,2m+1})=2m+2$ and $\chi_{=}(K_{2m+1,2m+1})=2$.  It is clear that $\chi(G) \leq \chi_=(G) \leq \chi_{=}^* (G)$ where $\chi(G)$ is the usual chromatic number of $G$.

Erd{\"o}s~\cite{E64} conjectured that $\chi_{=}^*(G) \leq \Delta(G)+1$ for all graphs $G$, where $\Delta(G)$ denotes the maximum degree of $G$.
In 1970, Hajn\'{a}l and Szemer\'{e}di proved it.

\begin{thm}[\cite{HS70}] \label{thm: HS}
Every graph $G$ has an equitable $k$-coloring when $k \geq \Delta(G)+1$.
\end{thm}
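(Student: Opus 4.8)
The plan is to prove the Hajn\'{a}l--Szemer\'{e}di theorem in the classical way --- induction combined with ``Hajn\'{a}l--Szemer\'{e}di moves'' --- following the streamlined argument of Kierstead and Kostochka. Write $r=k-1$. The statement is equivalent to the assertion that $\Delta(G)\le r$ forces $G$ to be equitably $(r+1)$-colorable (given $G$ and $k\ge\Delta(G)+1$, apply this with $r=k-1$). By a routine reduction one may further assume $(r+1)$ divides $n:=|V(G)|$: otherwise take the disjoint union of $G$ with a clique on $(r+1)-\big(n\bmod(r+1)\big)$ vertices, which does not raise the maximum degree above $r$; an equitable $(r+1)$-coloring of this larger graph meets the clique in at most one vertex per color, so deleting the clique subtracts $1$ from exactly the right number of color classes and leaves an equitable coloring of $G$. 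Thus it suffices to show that if $\Delta(G)\le r$ and $s:=n/(r+1)$ is an integer, then $V(G)$ partitions into $r+1$ independent sets of size $s$.

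I would prove this by induction on $|E(G)|$. If $E(G)=\varnothing$, split $V(G)$ arbitrarily into $r+1$ sets of size $s$. Otherwise pick $xy\in E(G)$ and, by induction, fix a partition $V_1,\dots,V_{r+1}$ of $V(G-xy)$ into independent sets of size $s$. If $x$ and $y$ lie in distinct parts, this partition works for $G$ as well; so suppose $x,y\in V_1$. Since $V_1$ is independent in $G-xy$, the edge $xy$ is the only edge of $G$ with both ends in $V_1$, so $x$ has at most $r-1$ neighbors outside $V_1$, and hence some part $V_j$ ($j\ne 1$) contains no neighbor of $x$. Moving $x$ from $V_1$ into $V_j$ yields a \emph{near-equitable} partition of $V(G)$ into independent sets --- one part of size $s-1$, one of size $s+1$, the rest of size $s$. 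It then remains to prove a self-contained lemma: \emph{every near-equitable $(r+1)$-coloring of a graph with maximum degree at most $r$ can be converted to an equitable one}, again via moves.

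To prove that lemma, let $S$ denote the oversized class and $D$ the undersized one. If some vertex of $S$ has no neighbor in $D$, move it into $D$ and stop. Otherwise, grow greedily the family $\mathcal{B}$ of classes ``reachable'' from $D$ by cascades of single-vertex moves (a class joins $\mathcal{B}$ once one of its vertices can be shifted, in such a cascade, into a class already in $\mathcal{B}$); if $S$ ever joins $\mathcal{B}$, the corresponding cascade from $S$ to $D$ rebalances the coloring. If $S$ never joins $\mathcal{B}$, the failure of the cascade forces an abundance of edges incident to the classes in $\mathcal{B}$; an appropriately weighted count of these edges against $\Delta(G)\le r$ is contradictory unless some class of $\mathcal{B}$ has a vertex with very few neighbors in $\bigcup\mathcal{B}$, and in that case a more elaborate move --- shifting one vertex between consecutive classes along a carefully chosen chain, with that vertex taking up the slack --- rebalances the coloring.

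The reductions and the single-move case are routine. The real obstacle, and the reason the theorem long resisted a short proof, is the bookkeeping in the stuck case of the lemma: giving $\mathcal{B}$ the right recursive structure, choosing the weights in the edge count so that it is genuinely contradictory unless the escape move is available, and checking that the escape move keeps every class independent and of the correct size. I would organize this exactly as Kierstead and Kostochka do, around a case split on whether some terminal class of $\mathcal{B}$ contains a vertex with fewer than a prescribed number of neighbors in $\bigcup\mathcal{B}$, and I expect essentially all of the work to be concentrated there.
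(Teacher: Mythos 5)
The paper does not prove this statement at all: Theorem~\ref{thm: HS} is quoted as the Hajn\'{a}l--Szemer\'{e}di theorem with a citation to \cite{HS70}, so there is no in-paper argument to compare yours against. Judged on its own, your write-up is a correct high-level outline of the known Kierstead--Kostochka short proof, and its routine parts are fine: the padding-by-a-clique reduction to the case $(r+1)\mid n$ is sound (the added clique has at most $r$ vertices, so the maximum degree stays at most $r$, its vertices land in distinct classes, and deleting them decreases exactly the right classes by one), and the induction on $|E(G)|$ with the single-vertex move producing a near-equitable partition (one class of size $s-1$, one of size $s+1$) is correct, since $x$ has at most $r-1$ neighbors outside its class and there are $r$ other classes.

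The genuine gap is the rebalancing lemma, which is exactly where the entire difficulty of the theorem lives and which you do not prove. Your description of it --- grow a family $\mathcal{B}$ of classes reachable by cascades of moves toward the deficient class, and if the oversized class never becomes reachable, derive a contradiction from ``an appropriately weighted count'' of edges unless ``a more elaborate move'' along ``a carefully chosen chain'' is available --- never specifies the weights, the recursive structure of $\mathcal{B}$ (accessible versus terminal classes, the role of vertices with few neighbors in $\bigcup\mathcal{B}$), why the counting against $\Delta(G)\le r$ is actually contradictory, or why the escape move preserves independence and class sizes. You acknowledge that ``essentially all of the work'' is concentrated there and propose to organize it ``exactly as Kierstead and Kostochka do,'' but that is a citation of their argument rather than a proof of it; if citing the literature were permitted, one could simply cite \cite{HS70} as the paper does. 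As written, the proposal is a correct plan whose central lemma is asserted, not established, so it does not constitute a proof of Theorem~\ref{thm: HS}.
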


In 1994, Chen, Lih, and Wu~\cite{CL94} conjectured an equitable analogue of Brooks' Theorem~\cite{B41}, known as the $\Delta$-Equitable Coloring Conjecture.

\begin{conj}[\cite{CL94}] \label{conj: ECC}
A connected graph $G$ is equitably $\Delta(G)$-colorable if and only if it is different from $K_m$, $C_{2m+1}$, and $K_{2m+1,2m+1}$.
\end{conj}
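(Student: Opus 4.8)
The plan is to split the biconditional, since only one direction is short. For the \emph{necessity} direction I would verify that each excluded family fails to be equitably $\Delta$-colorable by a counting/parity argument. In $K_m$ we have $\Delta=m-1$, so an equitable $(m-1)$-coloring would need a color class of size $\lceil m/(m-1)\rceil = 2$, impossible since $K_m$ has no independent pair. For $C_{2m+1}$ we have $\Delta=2$, and an odd cycle is not even properly $2$-colorable. For $K_{2m+1,2m+1}$ we have $\Delta=2m+1$ and $|V|=4m+2$, so in an equitable $(2m+1)$-coloring every one of the $2m+1$ color classes has size exactly $2$; each class, being independent, lies inside one part, so the classes contained in a given part would partition a set of odd order $2m+1$ into pairs — a contradiction. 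This disposes of the ``only if'' direction cleanly.

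The ``if'' direction — that every connected graph outside these three families is equitably $\Delta$-colorable — is the real content. Here I would attempt an induction on $|V(G)|$, using Theorem~\ref{thm: HS} both as a base case and as a fallback whenever $\Delta(G)+1$ colors are available with slack to rebalance. The crucial step would be a Brooks-type local recoloring lemma: delete a well-chosen vertex $v$ (minimum degree, or sitting in a small separator), take an equitable $\Delta$-coloring of $G-v$ by induction, reinsert $v$, and restore equitability through a sequence of Kempe-chain swaps on bichromatic components, tracking when the extremal configurations $K_{\Delta+1}$, $C_{2m+1}$, $K_{2m+1,2m+1}$ arise as the only obstructions. I would then organize the argument by cases: $G$ regular versus not, $G$ with a cut vertex versus $2$-connected, and $\Delta(G)$ small (handled directly, in the spirit of the proof of Brooks' theorem) versus $\Delta(G)$ large (where the abundance of colors makes the swaps go through).

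The main obstacle is that this second direction is, in full generality, precisely Conjecture~\ref{conj: ECC} itself, which is still open: the recoloring step resists a clean inductive treatment because gluing two nearly-equitable colorings across a separator can create classes differing by two, and repairing the imbalance tends to propagate around the graph rather than terminate. So I would not expect to settle the conjecture outright, but rather to prove it under extra hypotheses — for instance when $\Delta(G)$ is large relative to $|V(G)|$, when $G$ has small chromatic number or bounded treewidth, or for restricted families such as the total graphs $T(G)$ with $\Delta(G)\le 2$ that are the actual focus of this paper.
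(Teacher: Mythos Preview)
The paper offers no proof of this statement: it is presented there as the $\Delta$-Equitable Coloring Conjecture of Chen, Lih, and Wu, and it remains open in general. The paper only remarks that it has been verified for certain classes (interval graphs, trees, outerplanar graphs, subcubic graphs, etc.). So there is nothing in the paper to compare your argument against.

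Your necessity direction is correct and standard: each of $K_m$, $C_{2m+1}$, and $K_{2m+1,2m+1}$ fails to be equitably $\Delta$-colorable by exactly the counting/parity arguments you give. You then correctly diagnose that the sufficiency direction \emph{is} the open conjecture, and that the inductive Kempe-chain sketch you outline cannot be expected to go through in general --- indeed, no such argument is known. Your self-assessment is accurate; the honest conclusion is that this is a conjecture in the paper, not a result, and a full proof is not to be had here.
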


Conjecture~\ref{conj: ECC} has been proven true for interval graphs, trees, outerplanar graphs, subcubic graphs, and several other classes of graphs~\cite{CL94,L98,YZ97}.

For disconnected graphs, equitable $k$-colorings on components can be merged after appropriately permuting color classes within each component~\cite{YZ97}, to obtain an equitable $k$-coloring of the whole graph.
On the other hand, an equitable $k$-colorable graph may have components that are not equitably $k$-colorable, for example,
the disjoint union $G=K_{3,3}+K_{3,3}$ with $k=\Delta(G)$.
With this in mind, Kierstead and Kostochka~\cite{KK10} extended Conjecture~\ref{conj: ECC} to disconnected graphs.

A \emph{total $k$-coloring} of a graph $G$ is a labeling $f: V(G) \cup E(G) \rightarrow S$ where $|S|=k$ and $f(u) \neq f(v)$ whenever $u$ and $v$ are adjacent or incident in $G$.  For some basic applications of total coloring, see~\cite{L12}.  The \emph{total chromatic number} of a graph $G$, denoted $\chi''(G)$, is the smallest integer $k$ such that $G$ has a total $k$-coloring.  Clearly, for any graph $G$, $\chi''(G) \geq \Delta(G)+1$.  A famous open problem in total coloring is the Total Coloring Conjecture.

\begin{conj}[\cite{B65}] \label{conj: TCC}
For any graph $G$, we have $\chi''(G) \leq \Delta(G)+2$.
\end{conj}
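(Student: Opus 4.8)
The final statement is the Total Coloring Conjecture, one of the oldest open problems in graph coloring, so what follows describes the approaches one would attempt rather than a complete proof. The natural plan is to split into a ``dense'' regime, where $\Delta(G)$ is large, and a ``sparse'' regime, where $\Delta(G)$ is small or moderate, since no single technique is known to cover both.

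In the dense regime the method of choice is probabilistic. One views a total $k$-coloring of $G$ as a proper $k$-coloring of the total graph $T(G)$, which has maximum degree at most $2\Delta(G)$; the key structural gain is that the neighborhood of any vertex of $T(G)$ splits into a ``vertex part'' and an ``edge part'', neither of which is close to a clique, so $T(G)$ is locally sparse. A semi-random (R\"odl nibble) coloring---repeatedly assign random colors and keep those that create no conflict---combined with a concentration inequality to control the surviving list sizes and the Lov\'asz Local Lemma to finish the small uncolored leftover, yields $\chi''(G) \le \Delta(G) + C$ for an absolute constant $C$; this is the Molloy--Reed bound. The hard part, and the reason the conjecture is still open, is pushing $C$ down to $2$: the slack the Local Lemma needs in order to extend the partial coloring to the remnant appears inherently larger than two colors, and no way around this loss is known.

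In the sparse regime one argues structurally. First, by standard reductions, one may assume $G$ is $2$-connected and, by embedding $G$ as an induced subgraph of a $\Delta$-regular graph (which does not lower $\Delta$, since restriction preserves properness of total colorings), that $G$ is $\Delta$-regular. By Vizing's theorem the edges of $G$ can be properly colored with $\Delta+1$ colors, partitioning $E(G)$ into matchings; the aim is to recolor these matchings and use one further color on the vertices so that at each vertex $v$ some color is left available for $v$, unused among its incident edges and its neighbors. The obstacle is that freeing such a color at every vertex simultaneously forces one to swap colors along many interacting Kempe chains, and controlling these chains globally is exactly what the known arguments can do only for $\Delta(G) \le 5$ (and, via lengthy discharging, for planar graphs with $\Delta(G)$ either large or very small). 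I expect this interaction of overlapping recoloring chains in a dense graph to be the genuine obstruction; it is also what blocks a short proof of the list analogue, which is why the present paper establishes the equitable list version only in the tractable case $\Delta(G) \le 2$.
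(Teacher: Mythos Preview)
The statement is the Total Coloring Conjecture, which the paper presents as an \emph{open conjecture} (Conjecture~\ref{conj: TCC}), not as a theorem; accordingly the paper gives no proof of it, and there is nothing to compare your attempt against. You correctly recognize this and instead outline the known partial approaches (Molloy--Reed in the dense regime, Vizing-style recoloring for small $\Delta$), which is the appropriate response: the conjecture remains open, and the paper invokes it only as background motivation for Fu's Conjecture~\ref{conj: ETCC} and the List Equitable Total Coloring Conjecture.
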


Total coloring can be rephrased in terms of vertex coloring.  Specifically, the \emph{total graph} of graph $G$, $T(G)$, is the graph with vertex set $V(G) \cup E(G)$ and vertices are adjacent in $T(G)$ if and only if the corresponding elements are adjacent or incident in $G$.  Then $G$ has a total $k$-coloring if and only if $T(G)$ has a proper $k$-coloring.  It follows that $\chi''(G) = \chi(T(G))$.

Given a graph $G$, one can construct $T(G)$ in two steps: first subdivide every edge of $G$ to get a new graph $H$, then take its square $H^2$ (i.e. add an edge $uv$ whenever $u,v$ are vertices in $H$ with distance $2$).
For example, for paths on $m$ vertices and cycles on $n$ vertices, we have that: $T(P_m)= P_{2m-1}^2$ and $T(C_n) = C_{2n}^2$.

In 1994, Fu~\cite{F94} initiated the study of equitable total coloring.
A total $k$-coloring of a graph $G$ is an \emph{equitable total $k$-coloring} of $G$ if the sizes of the color classes differ by at most~1.
In the same paper, Fu introduced the Equitable Total Coloring Conjecture.

\begin{conj}[\cite{F94}] \label{conj: ETCC}
For every graph $G$, $G$ has an equitable total $k$-coloring for each $k \geq \max\{\chi''(G), \Delta(G)+2 \}$.
\end{conj}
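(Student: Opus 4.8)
The plan is to start from an arbitrary total $k$-coloring of $G$ and rebalance it into an equitable one by local recolorings, in the spirit of the proof of Theorem~\ref{thm: HS}. Fix $k \ge \max\{\chi''(G),\Delta(G)+2\}$ and set $n = |V(G)|+|E(G)| = |V(T(G))|$. First I would reduce to connected $G$: if $G = G_1 + G_2$ then $T(G) = T(G_1)+T(G_2)$, and since $\chi''(G) = \max_i \chi''(G_i)$ and $\Delta(G) = \max_i \Delta(G_i)$ the hypothesis on $k$ descends to each $G_i$, so equitable $k$-colorings of the $T(G_i)$ (obtained inductively) merge into one of $T(G)$ after permuting colors within components as in~\cite{YZ97}; note the bound here is large enough to make this component-wise reduction valid, unlike for weaker bounds such as $k = \Delta(G)$. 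So assume $G$ is connected. Since $k \ge \chi''(G) = \chi(T(G))$, the total graph $T(G)$ has a proper $k$-coloring $f$; if $k \ge n$ one may give every element its own color, so assume $k < n$ and put $q = \lfloor n/k \rfloor \ge 1$. Call a color class of $f$ \emph{large} if it exceeds $q+1$ and \emph{small} if it is below $q$; $f$ is equitable exactly when there are neither.

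While $f$ has a large class $A$ and a small class $B$, the goal is to move one element out of $A$ into $B$ without creating new imbalance, then iterate. The controlling fact is that $T(G)$ has maximum degree at most $2\Delta(G) \le 2k-4$: a vertex $v$ of $G$, as an element of $T(G)$, is incident to the $\deg_G(v)$ neighbors of $v$ and the $\deg_G(v)$ edges at $v$, while an edge $uv$ is incident to $u$, $v$, and at most $\deg_G(u)+\deg_G(v)-2$ further edges meeting $u$ or $v$. Since $2k-4 \ge k$ once $k \ge 4$, an element of $A$ need not see a free non-small color, so a single recoloring will not suffice; instead I would construct an augmenting system of Kempe-type chains, exactly as in the proof of the Hajn\'al--Szemer\'edi theorem, recoloring a bounded family of elements so that one member of $A$ lands in $B$ while every intermediate class keeps size $q$ or $q+1$. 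One should exploit the structure of $T(G)$ here: writing $T(G) = H^2$ with $H$ the subdivision of $G$, the graph $H$ is bipartite with parts $V(G)$ and $E(G)$, and every vertex on the $E(G)$ side has degree exactly $2$ in $H$ — which sharply limits how classes interact through edge-elements and should keep the chains short and controllable.

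The rebalancing step is where the real difficulty lies: turning a proper total $k$-coloring into an equitable one appears to need the full transversal/flow machinery behind Theorem~\ref{thm: HS}, and the facts that $T(G)$ has maximum degree about $2k$ rather than $k$ and that vertex-elements and edge-elements of $G$ must be kept jointly in balance make a direct adaptation genuinely hard, so I do not expect a short complete proof in general. The case I would actually settle is $\Delta(G) \le 2$: then every component of $G$ is a path or a cycle, $T(P_m) = P_{2m-1}^2$ and $T(C_n) = C_{2n}^2$ are squares of a path or cycle with $\Delta(T(G)) \le 4$, and the periodic coloring $i \mapsto i \bmod k$ is already a proper \emph{and} equitable $k$-coloring of $P_N^2$ for every $k \ge 3$; the same works for $C_N^2$ when $k \mid N$, and when $k \nmid N$ one repairs the $O(1)$ elements near the seam by hand (the usual adjustment behind the value of $\chi(C_N^2)$), after which the component colorings merge as above.
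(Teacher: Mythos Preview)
The statement you are attempting to prove is Conjecture~\ref{conj: ETCC}, which the paper records as an \emph{open conjecture} of Fu; the paper gives no proof of it and does not claim one. So there is no ``paper's own proof'' to compare against. Your general outline --- start from a proper total $k$-coloring (available since $k\ge\chi''(G)$) and rebalance via Hajn\'al--Szemer\'edi-type chains --- is a plausible attack, but you yourself flag the essential gap: the Hajn\'al--Szemer\'edi machinery works at $k\ge\Delta(T(G))+1=2\Delta(G)+1$, whereas here $k$ may be as small as $\Delta(G)+2\approx\Delta(T(G))/2$, so the transversal/flow arguments would have to be strengthened by roughly a factor of two. Nothing in your sketch explains how to bridge that gap, and as far as is known no one has; this is exactly why the statement is still a conjecture.

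Your fallback to $\Delta(G)\le 2$ is fine for the \emph{non-list} equitable total coloring problem: the periodic coloring on $P_N^2$ and $C_N^2$ works, and the component-merging trick from~\cite{YZ97} is legitimate for ordinary equitable coloring. But note that this is not what the paper proves. The paper's contribution (Theorems~\ref{thm: totalmain} and~\ref{lem: mainlemma}) is the \emph{list} analogue, Conjecture~\ref{conj: LETCC}, for $\Delta(G)\le 2$, and there the component-merging shortcut fails --- one cannot permute colors within a component when the lists are arbitrary --- which is why the paper needs Lemmas~\ref{lem: gpelsmajer} and~\ref{lem: cyclepowers} and the extended case analysis of Section~\ref{mainresult}. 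So your $\Delta(G)\le 2$ argument settles a strictly easier (and already understood) special case than the paper's main theorem, and your general argument remains, as you concede, incomplete.
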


The ``$\Delta(G)+2$'' is required because Fu~\cite{F94} found an infinite family of graphs $G$ with $\chi''(G)= \Delta(G)+1$ but no equitable total $(\Delta(G)+1)$-coloring.
Note that if Conjecture~\ref{conj: TCC} is true, we would have $\max\{\chi''(G), \Delta(G)+2 \} = \Delta(G)+2$.

In terms of total graphs, Conjecture~\ref{conj: ETCC} states that for any graph $G$, $T(G)$ has an equitable $k$-coloring for each $k \geq \max\{\chi(T(G)), \Delta(G)+2 \}$.  Since $\Delta(T(G)) = 2 \Delta(G)$, this is much stronger than Theorem~\ref{thm: HS} and the $\Delta$-ECC for total graphs.

Fu~\cite{F94} showed that Conjecture~\ref{conj: ETCC} holds for complete bipartite graphs, complete $t$-partite graphs of odd order, trees, and certain split graphs.  Equitable total coloring has also been studied for graphs with maximum degree 3~\cite{W02}, joins of certain graphs~\cite{GM12,GZ09,ZW05}, the Cartesian product of cycles~\cite{CX09}, and the corona product of cubic graphs~\cite{F15}.

\subsection{Equitable Choosability and List Total Coloring}\label{ec}

List coloring is a well-studied variation of classic vertex coloring, and list versions of total coloring and equitable coloring have each received a lot of attention.
List coloring was introduced independently by Vizing~\cite{V76} and Erd\H{o}s, Rubin, and Taylor~\cite{ET79} in the 1970's.  For a graph $G$, if each vertex $v \in V(G)$ is assigned a list of colors $L(v)$, we say $L$ is a \emph{list assignment} for $G$.  The graph $G$ is \emph{$L$-colorable} if there exists a proper coloring $f$ of $G$ such that $f(v) \in L(v)$ for each $v \in V(G)$ (and $f$ is a \emph{proper $L$-coloring} of $G$).  The \emph{list chromatic number} of a graph $G$, denoted $\chi_l(G)$, is the smallest $k$ such that $G$ is $L$-colorable whenever $L$ is a list assignment with lists of size $k$ (or more).  We say that a graph $G$ is \emph{k-choosable} when $\chi_l(G) \leq k$.  When the lists associated with the list assignment $L$ have uniform size $k$, we say that $L$ is a \emph{$k$-assignment}.

Considering identical lists yields $\chi(G) \leq \chi_l(G)$, while greedy coloring yields $\chi_l(G)\leq \Delta(G)+1$, for all graphs $G$. Vizing~\cite{V76} proved a list version of Brooks' Theorem: every connected graph $G$ which is neither a complete graph nor an odd cycle satisfies $\chi_l(G) \leq \Delta(G)$.  Erd\H{o}s, Taylor, and Rubin~\cite{ET79} observed that bipartite graphs can have arbitrarily large list chromatic numbers, which shows that the gap between $\chi(G)$ and $\chi_l(G)$ can be arbitrarily large.
Graphs for which $\chi(G) = \chi_l(G)$ are called \emph{chromatic-choosable} graphs~\cite{O02}.

List total coloring (i.e., list coloring of total graphs) has been studied by many researchers.  In 1997, Borodin, Kostochka, and Woodall introduced the List Total Coloring Conjecture.

\begin{conj}[\cite{BK97}] \label{conj: LTCC}
For any multigraph $G$, $\chi(T(G)) = \chi_l(T(G))$; i.e., total graphs are chromatic-choosable.
\end{conj}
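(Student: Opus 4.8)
Conjecture~\ref{conj: LTCC} is a well-known open problem, so I would not expect a full proof; the plan is to isolate the combinatorial core and then settle the cases that current methods can reach -- in particular $\Delta(G)\le 2$, which the present paper needs in order to pin down $\chi_l(T(G))$.

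First reduce to connected multigraphs: $T(G_1\cup G_2)=T(G_1)\cup T(G_2)$, and both $\chi$ and $\chi_l$ of a disjoint union equal the maximum over the components. For connected $G$ the natural tool is the kernel method (the Bondy--Boppana--Siegel lemma, as used by Galvin for list edge colouring): if a digraph $D$ is kernel-perfect and $|L(x)|\ge 1+d^+_D(x)$ for every $x$, then $D$ is $L$-colourable. A one-shot application to $T(G)$ fails, though, since $T(G)$ has minimum degree up to $2\Delta(G)$, far above $\chi(T(G))\approx\Delta(G)+2$, so no orientation of $T(G)$ has all out-degrees below $\chi(T(G))$. Instead I would colour in two coordinated stages: (i) list-edge-colour $G$ from lists of size $\chi(T(G))$, i.e.\ list-colour $L(G)=T(G)[E(G)]$, using a Galvin-type kernel-perfect orientation of the bipartite incidence structure of $G$; this requires $\chi'_l(G)\le\chi(T(G))$, which would follow from the (also open) List Edge Colouring Conjecture $\chi'_l(G)=\chi'(G)$ because $L(G)$ is an induced subgraph of $T(G)$; (ii) list-colour the vertices $V(G)$, each $v$ now having to avoid its $\deg_G(v)$ incident edge colours and its $\le\deg_G(v)$ neighbour colours. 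The essential difficulty is interleaving (i) and (ii) -- choosing the edge colouring and a vertex ordering together -- so that at most $\chi(T(G))$ colours are used overall; this is precisely where general $G$ defeats the method and where structural hypotheses must enter.

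The cases one can actually finish: if $\Delta(G)\le 2$ then $G$ is a union of paths and cycles, so $T(P_m)=P_{2m-1}^2$ and $T(C_n)=C_{2n}^2$ by the constructions recalled above; squares of paths are interval graphs (vertex $i\mapsto[i,i+2]$), hence chordal and therefore chromatic-choosable (colour along a perfect elimination ordering so that each vertex's previously-coloured neighbours form a clique), and squares of cycles are known to be chromatic-choosable, so the conjecture holds in this regime. For $\Delta(G)\le 3$ and for planar graphs of large enough maximum degree one follows Borodin--Kostochka--Woodall, combining the two-stage kernel method with discharging to control the local structure that makes the relevant orientations kernel-perfect. The main obstacle beyond this is that stage (i) already contains a famous open problem: even the List Edge Colouring Conjecture is known only for bipartite multigraphs, small $\Delta$, and scattered special cases, the best general bound being Kahn's asymptotic $\chi'_l(G)=(1+o(1))\Delta(G)$ rather than an exact one; and even granting an optimal edge colouring, fitting it together with a vertex colouring inside $\chi(T(G))$ colours has no known general solution. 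So realistically this proposal yields a proof only for the structured classes above, which is what the rest of the paper uses.
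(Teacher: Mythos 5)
This statement is Conjecture~\ref{conj: LTCC}, the List Total Coloring Conjecture of Borodin, Kostochka, and Woodall: the paper does not prove it (and cannot be expected to), it only cites it as a well-known open problem that has been verified for certain planar graphs and multicircuits. So there is no proof in the paper to compare against, and you were right not to offer one. Your assessment of the special cases is essentially sound and consistent with what the paper uses: for $\Delta(G)\le 2$ the components of $T(G)$ are squares of paths and squares of even cycles, squares of paths are interval (hence chordal, hence chromatic-choosable via a reverse perfect elimination ordering), and squares of cycles are chromatic-choosable by Prowse--Woodall, so the conjecture does hold in that regime; note that chromatic-choosability (unlike the equitable versions studied in this paper) passes to disjoint unions, so your reduction to components is fine.

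One concrete error in your sketch: the claim that ``no orientation of $T(G)$ has all out-degrees below $\chi(T(G))$'' is false. For a $\Delta$-regular graph $G$, $T(G)$ is $2\Delta$-regular, so an Eulerian orientation has every out-degree exactly $\Delta \le \chi(T(G))-1$, which is small enough for the kernel lemma with lists of size $\Delta+2$. The genuine obstruction to a one-shot kernel argument is not the out-degree bound but kernel-perfectness: the $(\Delta+1)$-cliques of $T(G)$ (a vertex together with its incident edges) must be transitively oriented, and arranging this compatibly across the whole graph while keeping out-degrees controlled is exactly where the known multicircuit proofs of Kostochka and Woodall become delicate and where the general case remains open. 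Your stage (i) also rests on the List Edge Coloring Conjecture, as you acknowledge, so the proposal is a program rather than a proof --- which matches the status of the statement in the paper.
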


Conjecture~\ref{conj: LTCC} has been verified for certain planar graphs~\cite{C09,LM13,LX11,WL16} and multicircuits~\cite{KW021,KW022}.  In~\cite{KW01} it was conjectured that the square of every graph is chromatic-choosable (a much stronger conjecture), but this was recently disproved by Kim and Park~\cite{KP15}.

A list analogue of equitable coloring was introduced in 2003 by Kostochka, Pelsmajer, and West~\cite{KP03}.
Suppose $L$ is a $k$-assignment for the graph $G$.  A proper $L$-coloring of $G$ is \emph{equitable} if each color appears on at most $\lceil |V(G)|/k \rceil$ vertices.
A graph is \emph{equitably $k$-choosable} if an equitable $L$-coloring of $G$ exists whenever $L$ is a $k$-assignment for $G$.

While a $k$-choosable graph is always $k$-colorable, it can happen that a graph is equitably $k$-choosable but not equitably $k$-colorable: for example, $K_{1,6}$ with $k=3$.
Like equitable $k$-coloring, equitable $k$-choosability of a graph $G$ is not necessarily inherited by components or subgraphs $H$:
the color class size restriction may be harder to satisfy. Unlike equitable $k$-coloring, equitable $k$-choosability of its
components does not imply that the whole (disconnected) graph is also equitably $k$-choosable: for example, $K_{1,6}+K_2$ with $k=3$.


List analogues of Theorem~\ref{thm: HS} and Conjecture~\ref{conj: ECC} are proposed in~\cite{KP03}.

\begin{conj}[\cite{KP03}] \label{conj: KPW1}
Every graph $G$ is equitably $k$-choosable when $k \geq \Delta(G)+1$.
\end{conj}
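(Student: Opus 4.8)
The plan is to argue by induction on $n = |V(G)|$, which is the template behind the known partial cases of the conjecture (forests, graphs of small degeneracy, graphs of maximum degree at most $3$, and so on). Fix $k \ge \Delta(G)+1$ and a $k$-assignment $L$, and set $m = \lceil n/k \rceil$; we must produce a proper $L$-coloring of $G$ in which no color class has more than $m$ vertices. A useful preliminary observation: appending $km - n < k$ isolated vertices, with arbitrary lists of size $k$, does not change the ceiling (since $\lceil km/k \rceil = m$), so one may as well assume $k \mid n$ when convenient, and there ``equitable'' just means that all $k$ color classes have size exactly $n/k$ --- equivalently, that $V(G)$ is partitioned into $k$ independent sets of size $n/k$ receiving colors allowed by the lists.

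For the inductive step I would delete a single vertex $v$ and apply the inductive hypothesis to $G - v$. Because $\deg_G(v) \le \Delta(G) \le k-1$, some color of $L(v)$ appears on no neighbor of $v$, so $v$ can always be colored \emph{properly}; the only question is whether it can be colored \emph{equitably}. If $n \equiv 1 \pmod k$, then $\lceil (n-1)/k \rceil = m - 1$, so every color class of the coloring of $G - v$ has at most $m-1$ vertices, and inserting $v$ into any admissible class keeps every class of size at most $m$ --- this case is immediate. The genuine difficulty is the case $n \not\equiv 1 \pmod k$: then $\lceil (n-1)/k \rceil = m$, the coloring of $G - v$ may already have ``saturated'' color classes of size exactly $m$, and if every color of $L(v)$ is saturated then no admissible class can absorb $v$ without overflowing.

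The crux is therefore an augmentation (Kempe-type rotation) step: starting from a proper $L$-coloring of $G - v$ with all classes of size $\le m$, modify it so that it stays proper, still has all classes of size $\le m$, and in addition has a color that is legal for $v$, absent from $N(v)$, and sitting in a non-saturated class. The intended mechanism is the familiar one --- pick a saturated class $A$ whose color lies in $L(v)$, choose $w \in A$, recolor $w$ into some other class, and cascade such single-vertex moves along an alternating chain so that overflow is always pushed forward and is eventually absorbed by a class that began with fewer than $m$ vertices; this is exactly the engine of the Hajn\'{a}l--Szemer\'{e}di proof (Theorem~\ref{thm: HS}) and of the near-equitable coloring lemmas of Kostochka, Pelsmajer, and West. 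In the list setting, however, a recoloring move at $w$ may be blocked --- its list can be too small for the colors forbidden by its current neighbors --- where the analogous move at a neighbor of $w$ would not be, so the chain can stall; forcing it to terminate appears to require both a carefully chosen deleted vertex $v$ and a rotation argument that is robust to arbitrary lists. I expect this augmentation lemma to be the entire obstacle: the known partial results all either bound the degeneracy or the maximum degree of $G$ (so that $v$ may be chosen with few neighbors, hence few colors of $L(v)$ saturated) or exploit special structure to steer the rotation, and no argument for arbitrary graphs with arbitrary lists is presently known --- indeed the conjecture is still open, and this paper establishes it only for the restricted families it treats.
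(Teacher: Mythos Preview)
The statement you are addressing is Conjecture~\ref{conj: KPW1}, not a theorem: the paper does not prove it, and indeed records it as an open problem (attributing partial results to various classes of graphs and to Theorem~\ref{thm: KKresult}). So there is no ``paper's own proof'' to compare against, and your write-up is not a proof either --- as you yourself say in the final lines, the augmentation/rotation step you describe is precisely the obstacle that keeps the conjecture open, and you do not supply an argument that overcomes it.

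Viewed as a discussion of why the conjecture is hard, your outline is reasonable and accurate: the reduction to $k \mid n$, the single-vertex induction, and the identification of the saturated-class case as the crux all match the standard framework (and underlie Lemma~\ref{lem: pelsmajer} and the partial results in~\cite{KP03,KK13}). But as a proof proposal it has a genuine gap exactly where you flag it: you assert that a Kempe-type cascade ``is the intended mechanism'' without giving any lemma guaranteeing termination in the list setting, and you then concede that no such lemma is known. That is the whole problem. If your goal was to prove the conjecture, you have not; if your goal was to explain its status, you have done so correctly, but then the document should present it as a conjecture with commentary rather than as a statement with a proof.
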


\begin{conj}[\cite{KP03}] \label{conj: KPW2}
A connected graph $G$ is equitably $k$-choosable for each $k \geq \Delta(G)$ if it is different from $K_m$, $C_{2m+1}$, and $K_{2m+1,2m+1}$.
\end{conj}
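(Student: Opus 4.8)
The plan is to treat Conjecture~\ref{conj: KPW2} as the list analogue of the equitable Brooks-type theorem and to attack it by induction on $n=|V(G)|$, separating the over-threshold regime $k\ge\Delta(G)+1$ from the boundary regime $k=\Delta(G)$. Throughout I would assume $G$ connected with $G\notin\{K_m,C_{2m+1},K_{2m+1,2m+1}\}$. Because equitable $k$-choosability is not inherited by subgraphs (as the excerpt emphasizes with $K_{1,6}$ and $K_{1,6}+K_2$), a naive induction on $G-v$ fails: the deleted graph may be disconnected, may itself be one of the excluded graphs, or may simply not inherit equitability. So the first task is to formulate a strengthened inductive statement---for instance, one that permits a single pre-colored vertex and that tracks, for each color $\alpha$, how far its class lies below the cap $\lceil n/k\rceil$---so that the hypothesis survives vertex deletion and the degenerate sub-configurations can be absorbed as explicit base cases.

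For the over-threshold regime $k\ge\Delta(G)+1$, every vertex satisfies $\deg(v)\le\Delta(G)\le k-1$, so after deleting a vertex $v$ and equitably coloring $G-v$ by induction, at least one color is free at $v$. The only obstruction to equitability is that every color free at $v$ already fills its class to the cap; here I would use a Kostochka--Pelsmajer--West style rotation, recoloring along an alternating chain to open an underfull color for $v$ while keeping all other classes within the cap. I should be candid that this regime is exactly the content of Conjecture~\ref{conj: KPW1} restricted to connected graphs, which is open in general; I would therefore expect to push the rotation argument through only for classes where the chains can be controlled (bounded $\Delta(G)$, or graphs with a degeneracy or elimination ordering such as chordal or outerplanar graphs), mirroring the partial progress known for Conjecture~\ref{conj: ECC}.

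For the boundary regime $k=\Delta(G)$ the slack disappears, and this is where the real difficulty lies. Since $G$ is connected, not complete, and not an odd cycle, Vizing's list version of Brooks' Theorem (quoted in the excerpt) gives $\chi_l(G)\le\Delta(G)$, so every $\Delta(G)$-assignment $L$ admits at least one proper $L$-coloring $f$. The plan is then to rebalance $f$: while some color $\alpha$ exceeds $\lceil n/k\rceil$ and some color $\beta$ falls below its fair share, move weight from $\alpha$ to $\beta$ along an $\alpha$--$\beta$ alternating (Kempe-type) structure. The complication is that in the list setting such a swap is legal only where both $\alpha$ and $\beta$ lie in the relevant lists, so the recoloring chains must be built respecting $L$; the goal is to show that whenever no legal rebalancing move exists, the resulting rigidity forces $G$ to be the one remaining excluded graph $K_{2m+1,2m+1}$ (which is precisely the equitability exception that is $\Delta$-choosable but not equitably $\Delta$-choosable).

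The main obstacle is exactly this boundary case $k=\Delta(G)$: with no spare color, the list-compatible alternating swaps needed to equalize class sizes may fail to exist, and ruling out all the ways rebalancing can stall---without being able to lean on the Hajn\'al--Szemer\'edi theorem, which has no choosability analogue available, since Theorem~\ref{thm: HS} is a coloring rather than a list statement---is what keeps Conjecture~\ref{conj: KPW2} open in general. Accordingly I would aim first to establish it for graph classes whose structure tames these chains, and I would treat the excluded-graph analysis at the threshold as the genuine crux rather than a formality.
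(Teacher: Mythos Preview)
This statement is a \emph{conjecture}, not a theorem, and the paper does not prove it; it is quoted from~\cite{KP03} as an open problem. Consequently there is no ``paper's own proof'' to compare against. What the paper does is verify Conjecture~\ref{conj: KPW2} for the special families $P_n^p$ and $C_n^p$ (Theorems~\ref{thm: pathpowers} and~\ref{thm: cyclepowers}), via the concrete combinatorial device of Lemma~\ref{lem: pelsmajer} and its extension Lemma~\ref{lem: gpelsmajer}: one peels off a carefully chosen $k$-set (or $mk$-set) of vertices, applies induction to the remainder, and then greedily extends.

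Your proposal is candid about the difficulties and explicitly concedes that the over-threshold regime is exactly Conjecture~\ref{conj: KPW1} and that the boundary regime $k=\Delta(G)$ is where the argument stalls, so you are not in fact claiming a proof. That honesty is appropriate, but the document you have written is a strategic outline rather than a proof attempt: the key steps (the ``Kostochka--Pelsmajer--West style rotation'' to open an underfull class, and the list-compatible Kempe rebalancing at $k=\Delta(G)$) are asserted rather than carried out, and you yourself note that no choosability analogue of Hajn\'al--Szemer\'edi is available to lean on. In particular, the claim that rigidity of the rebalancing would force $G=K_{2m+1,2m+1}$ is precisely the crux that no one has established. So there is no genuine gap to name beyond the obvious one: the conjecture is open, your sketch does not close it, and the paper makes no pretense of closing it either. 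If your goal is to match the paper, you should instead target the special cases it actually handles, where the inductive peeling via Lemma~\ref{lem: pelsmajer} replaces the global Kempe-chain machinery you propose.
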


Conjectures~\ref{conj: KPW1} and~\ref{conj: KPW2} have been proved for forests, connected interval graphs, 2-degenerate graphs with $\Delta(G)\ge 5$, and small graphs (at most $2k$ vertices)~\cite{KP03}, as well as outerplanar graphs~\cite{ZB10}, series-parallel graphs~\cite{ZW11}, and other classes of planar graphs~\cite{LB09,ZB08,ZB15}).  In 2013, Kierstead and Kostochka made substantial progress on Conjecture~\ref{conj: KPW1}, as follows.

\begin{thm}[\cite{KK13}] \label{thm: KKresult}
If $G$ is any graph, then $G$ is equitably $k$-choosable whenever
\[
k \geq
\begin{cases}
\Delta(G)+1 & \text{if} \; \Delta(G) \leq 7 \\
\Delta(G) + \frac{\Delta(G)+6}{7} & \text{if } \; 8 \leq \Delta(G) \leq 30 \\
\Delta(G) +   \frac{\Delta(G)}{6} & \text{if } \; \Delta(G) \geq 31.
\end{cases}
\]
\end{thm}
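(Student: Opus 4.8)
The plan is to imitate Kierstead and Kostochka's short inductive proof of the Hajn\'{a}l--Szemer\'{e}di theorem (Theorem~\ref{thm: HS}), adding an ``exchange'' argument to cope with the list constraints; the extra colors appearing in the three cases are precisely the slack those exchanges require as $\Delta(G)$ grows. First I would reduce to the case $|V(G)|=ks$ for an integer $s$: since $\lceil |V(G)|/k\rceil = s$ forces $(s-1)k < |V(G)| \le sk$, we may pad $G$ with isolated vertices (each given an arbitrary $k$-list) until $|V(G)|=sk$, which changes neither $\Delta(G)$ nor $s$, while an equitable $L$-coloring of the padded graph restricts to one of $G$. So it suffices to show that for every $k$-assignment $L$ there is a proper $L$-coloring in which no color is used more than $s$ times.

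I would then induct on the number of edges of $G$. In the base case $G$ is edgeless; form the bipartite ``assignment graph'' $H$ on parts $V(G)$ and $\bigcup_v L(v)$, joining $v$ to each color of $L(v)$, so that an equitable $L$-coloring is exactly a subgraph of $H$ in which every vertex of $V(G)$ has degree $1$ and every color has degree at most $s$. Splitting each color into $s$ copies and applying Hall's theorem, such a subgraph exists because for any $X\subseteq V(G)$ the set $\bigcup_{v\in X}L(v)$ contains some $L(v)$ and hence has size at least $k$, supplying capacity at least $sk=|V(G)|\ge |X|$.

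For the inductive step, pick an edge $xy$ and, by induction, an equitable $L$-coloring $f$ of $G-xy$. If $f(x)\ne f(y)$ we are done, so suppose $f(x)=f(y)=c_0$ and uncolor $x$; now $f$ is a proper $L$-coloring of $G$ with $x$ the only uncolored vertex and every color used at most $s$ times. Call a color \emph{free for $x$} if it lies in $L(x)$ and on no neighbor of $x$; there are at least $k-\Delta(G)\ge 1$ such colors. If some color free for $x$ is used fewer than $s$ times, recolor $x$ with it and finish. The difficulty is concentrated in the remaining case, where every color free for $x$ is used exactly $s$ times.

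To handle that case I would argue purely by modifying the current coloring, as in the Hajn\'{a}l--Szemer\'{e}di proof. Build an auxiliary digraph on the color classes, drawing an arc from a full class $A$ to a class $B$ whenever some $v\in A$ can be \emph{legally moved} to $B$ --- the color of $B$ lies in $L(v)$ and on no neighbor of $v$. Search from the classes free for $x$ along these arcs for a path ending at a class used fewer than $s$ times; if it exists, carrying out the moves backwards along it frees up a color free for $x$, into which $x$ goes, and one checks that a suitable potential (e.g.\ a weighted count of full classes, or the distance from $x$ to the nearest non-full class) has strictly decreased, so the process terminates. If no such path exists, the classes reachable from the free colors are all full and ``solid'', and from their union together with $x$ one extracts a subgraph whose minimum degree is forced to exceed $\Delta(G)$, a contradiction. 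The main obstacle --- and the reason the threshold is $\Delta(G)+1$ only for $\Delta(G)\le 7$ and must be inflated for larger $\Delta(G)$ --- is that a legal move must dodge both a list restriction and the capacity restriction at once, so ensuring that the search for a vacancy succeeds, equivalently ruling out the solid configuration, requires the number of colors free for $x$ and of legal escape moves along the path to grow with $\Delta(G)$; pinning down exactly how much slack is needed and executing the degree count in the solid case is the technical core of the argument.
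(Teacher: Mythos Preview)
The paper does not prove Theorem~\ref{thm: KKresult}; it is quoted from Kierstead and Kostochka~\cite{KK13} and invoked only as a black box (chiefly to dispose of the case $k\ge 5$ in the discussion leading up to Theorem~\ref{lem: mainlemma}). There is therefore no proof in this paper to compare your proposal against.

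As a proof in its own right, your proposal is only an outline, and you concede as much in your last sentence. The entire weight of the theorem lies in the ``solid configuration'' case, where every color class reachable via legal moves from a color free for $x$ is full, and your treatment of that case is a single assertion: ``from their union together with $x$ one extracts a subgraph whose minimum degree is forced to exceed $\Delta(G)$.'' Nothing you have set up forces that. A vertex $v$ in a full reachable class may lack an outgoing arc either because the relevant colors in $L(v)$ are blocked by neighbors of $v$, or simply because those colors label classes outside the reachable set; only the first alternative manufactures edges inside the union, and even then one blocked color yields only one such edge, which is not enough to push the degree past $\Delta(G)$ without additional structure. This is exactly the place where the specific thresholds $\Delta+1$, $\Delta+\tfrac{\Delta+6}{7}$, $\Delta+\tfrac{\Delta}{6}$ would have to be earned, and you give no indication of how any of those numbers emerge from the counting. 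Until that degree count is actually executed, the argument is incomplete.
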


We are finally ready to turn our attention to total equitable choosability, a combination originally suggested by Nakprasit~\cite{N02}.
We begin with a natural extension of Conjecture~\ref{conj: ETCC}, which may be called the List Equitable Total Coloring Conjecture.

\begin{conj} \label{conj: LETCC}
For every graph $G$, $T(G)$ is equitably $k$-choosable for each \\ $k \geq \max \{\chi_l(T(G)), \Delta(G)+2 \}$.
\end{conj}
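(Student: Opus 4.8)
Since Conjecture~\ref{conj: LETCC} in full generality seems far beyond current techniques, the plan is to prove it in the case $\Delta(G) \leq 2$, as promised in the abstract. If $\Delta(G) \leq 2$ then $G$ is a disjoint union of paths and cycles, so $T(G)$ is a disjoint union of copies of $P_{2m-1}^2$ (one for each path component $P_m$) and $C_{2n}^2$ (one for each cycle component $C_n$), together with isolated vertices, and $\Delta(T(G)) = 2\Delta(G) \leq 4$. First I would pin down the relevant value of $k$: the interval graph $P_{2m-1}^2$ is chordal, hence chromatic-choosable, so $\chi_l(P_{2m-1}^2) = \chi(P_{2m-1}^2) \leq 3$; and for $n \geq 3$ the graph $C_{2n}^2$ is connected with $\Delta = 4$ and is neither complete nor an odd cycle, so Vizing's list version of Brooks' Theorem~\cite{V76} gives $\chi_l(C_{2n}^2) \leq 4$. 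Thus the bound ``$k \geq \max\{\chi_l(T(G)), \Delta(G)+2\}$'' becomes ``$k \geq 4$'' when $\Delta(G) = 2$, ``$k \geq 3$'' when $\Delta(G) = 1$ (so $T(G)$ is a disjoint union of triangles and isolated vertices), and is immediate when $\Delta(G) = 0$. So it suffices to show: every disjoint union of graphs $P_{2m-1}^2$ and $C_{2n}^2$ is equitably $k$-choosable for every $k \geq 4$, and every disjoint union of triangles and isolated vertices is equitably $k$-choosable for every $k \geq 3$.

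Next I would treat a single component; these are precisely the facts about equitable choosability of $P_N^2$ and $C_N^2$ alluded to in the abstract. The structural observation driving everything is that a coloring of $P_N^2$ (resp.\ $C_N^2$, for $N \geq 5$) is proper if and only if every three consecutive vertices along the path (resp.\ cyclically around the cycle) get distinct colors. For $P_N^2$, order the vertices $v_1, \ldots, v_N$ along the path and color greedily: at $v_i$ only the colors of $v_{i-1}$ and $v_{i-2}$ are forbidden, so with $k \geq 4$ at least two colors of $L(v_i)$ remain available, and I would always use an available color that has so far been used least often. A short accounting argument — with a minor look-ahead or a local recoloring among the last two or three vertices — then shows that no color is used more than $\lceil N/k \rceil$ times. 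For $C_N^2$ with $N$ even, the obstruction is the wrap-around, since the last two vertices each already see four colored neighbors; to handle this I would first fix the colors on a short initial arc (three to five vertices) so that the arc is properly colored and can be completed around the whole cycle — there is enough freedom because $k \geq 4$, but this needs a case split according to $N \bmod 3$ and a direct check of the small dense instances $C_6^2$, $C_8^2$, $C_{10}^2$ — and then color the remaining vertices exactly as in the path case while maintaining balance.

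The main work is the disjoint-union step, because equitable $k$-choosability is \emph{not} inherited from components (witness $K_{1,6}+K_2$). I would address this by upgrading each single-component result to a capacitated form: for every $k$-assignment $L$ of a component $H$ and every capacity vector $(b_c)$ with $\sum_c b_c \geq |V(H)|$ and with the $b_c$ not too unbalanced, $H$ has a proper $L$-coloring using each color $c$ at most $b_c$ times; the greedy rule ``use an available color of largest remaining capacity'' is the natural engine, and the path/cycle colorings above adapt to it. Granting the capacitated statements, one colors the components of $T(G)$ in any order: a running tally records how often each color has been used so far, and before coloring $H_j$ one feeds it capacities that allot a fair share of the global slack $k\lceil n/k\rceil - n$ (where $n = |V(T(G))|$), so that cumulative usage never exceeds $\lceil n/k\rceil$; one fixes once and for all which colors are permitted the larger value $\lceil n/k\rceil$ rather than $\lfloor n/k\rfloor$.

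I expect the capacitated estimate for $C_{2n}^2$ to be the crux. Path-squares are flexible enough that their colorings can be steered toward almost any reasonable capacity profile, but $C_{2n}^2$ with $3 \mid 2n$ has chromatic number $3$, so a careless coloring can leave a color class empty and break the budget, while when $\chi_l(C_{2n}^2) = 4 = k$ the palette is essentially forced and there is little slack to work with. Making the capacitated statement go through for $C_{2n}^2$ — showing one can always keep all color classes close to $2n/k$ while still closing up the cycle — is where I anticipate most of the technical effort, probably via a dedicated recoloring lemma for cyclic list-colorings that shifts a color from an overfull class to an underfull one along a short alternating segment. The triangle/isolated-vertex case for $k \geq 3$ is a small self-contained instance of the same bookkeeping (indeed, within a single triangle each color is automatically used at most once), and should fall out along the way.
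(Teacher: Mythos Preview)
Your overall reduction (pin down $k\ge 4$ when $\Delta(G)=2$, then handle disjoint unions of path-squares and even-cycle-squares) matches the paper's, but the mechanism you propose for the disjoint-union step is genuinely different from what the paper does, and it has a real gap.

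The paper never colors component by component against externally imposed capacities. Instead it runs a single global induction on $|V(T(G))|$ driven by Lemma~\ref{lem: gpelsmajer}: at each step one selects a set $S$ of $mk$ vertices (with $k=4$, $m\in\{1,2\}$), applies the inductive hypothesis to get an equitable $L$-coloring $c$ of $T(G)-S$, forms residual lists $L'(x)=L(x)\setminus\{c(u):u\in N(x)\setminus S\}$, and then only needs a proper $L'$-coloring of the \emph{bounded} graph $G[S]$ with no color repeated more than $m$ times. The sets $S$ are chosen so that this last step is a finite check: four end-vertices of a path-square, an entire $K_4$ component, or eight consecutive vertices of a cycle-square (with ad~hoc arguments when the largest cycle-square is $C_6^2$ or $C_7^2$). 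All the global bookkeeping is absorbed into the single inequality $\lceil (n-mk)/k\rceil+m=\lceil n/k\rceil$; there is no running tally and no capacity allocation.

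Your capacitated scheme, by contrast, asks for something you have not established and which is delicate in the list setting: that for every component $H$ and every ``not too unbalanced'' vector $(b_c)_c$ with $\sum_c b_c\ge |V(H)|$, there is a proper $L$-coloring of $H$ using each color $c$ at most $b_c$ times. Two issues. First, nothing in your allocation rule guarantees the vectors handed down are balanced on the colors that actually occur in $H_j$'s lists: a color heavily used on earlier components may have tiny remaining budget yet appear in every list on $H_j$. Second, even modest imbalance can be infeasible: for $C_8^2$ with $k=4$ and common lists one has $\alpha(C_8^2)=2$, so no color can be used three times, and a handed-down profile like $(3,3,1,1)$ is simply impossible. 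Your own remark that a ``dedicated recoloring lemma for cyclic list-colorings'' would be needed is precisely the missing piece, and it is not clear such a lemma exists in the strength you need. (Separately, the greedy ``use a least-used available color'' rule for $P_N^2$ is not obviously equitable in the list setting either; the paper handles path-squares cleanly via Lemma~\ref{lem: pelsmajer} by peeling $k$ vertices off one end.) The remove-$mk$-and-extend paradigm of Lemma~\ref{lem: gpelsmajer} is what lets the paper avoid all of this.
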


Fu's infinite family of graphs $G$ with $\chi''(G)= \Delta(G)+1$ and no equitable total $(\Delta(G)+1)$-coloring also has the property that $T(G)$ is not equitably $(\Delta(G)+1)$-choosable, so the Conjecture~\ref{conj: LETCC} would be sharp if true.
Also note that since $\Delta(T(G)) = 2 \Delta(G)$, the Conjecture~\ref{conj: LETCC} is saying something stronger about total graphs than Conjectures~\ref{conj: KPW1} and~\ref{conj: KPW2}.

\subsection{Results}

For our main result, we will need to consider certain powers of paths and powers of cycles. Recall that $G^k$, power of a graph $G$, has the same vertex set as $G$ and edges between any two vertices within distance $k$ in $G$.
Since powers of paths are interval graphs, Conjectures~\ref{conj: KPW1} and~\ref{conj: KPW2} follow from~\cite{KP03},
but a stronger result is possible.

\begin{thm} \label{thm: pathpowers}
For $p, n \in \N$, $P_n^p$ is equitably $k$-choosable whenever $k \geq p+1$.
\end{thm}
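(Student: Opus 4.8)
The plan is to induct on $n$, exploiting the interval-graph structure of $P_n^p$. Fix $k \geq p+1$ and label the vertices $v_1, v_2, \dots, v_n$ along the path, so that $v_i \sim v_j$ in $P_n^p$ iff $|i-j| \leq p$; thus each vertex has at most $2p$ neighbors, and a vertex $v_i$ sees only $v_{i-p}, \dots, v_{i-1}$ among earlier vertices. The base cases $n \leq k$ are small graphs on at most $k$ vertices, handled by the result of~\cite{KP03} quoted in the introduction (or directly, since with $n \leq k$ every color may be used once and the lists have size $k \geq p+1 > $ the number of earlier neighbors, so a greedy coloring works and each color class has size at most $1 = \lceil n/k \rceil$). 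For the inductive step with $n > k$, the natural move is to peel off the last block of $k$ vertices, $B = \{v_{n-k+1}, \dots, v_n\}$, color $P_{n-k}^p = P_n^p - B$ equitably from the restricted lists by induction, and then extend the coloring to $B$ using each color exactly once on $B$.

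The key point is that this maintains equitability: if in the smaller graph each color is used at most $\lceil (n-k)/k \rceil$ times, then after using each color once more on $B$, each color is used at most $\lceil (n-k)/k \rceil + 1 = \lceil n/k \rceil$ times, which is exactly the bound we need. So everything reduces to the extension step: given an equitable proper $L$-coloring $\phi$ of $P_{n-k}^p$, extend it to a proper $L$-coloring of $P_n^p$ in which the $k$ vertices of $B$ receive $k$ distinct colors. The vertices of $B$ that have a neighbor outside $B$ are exactly $v_{n-k+1}, \dots, v_{n-k+p}$ (the first $p$ vertices of $B$), each of which sees at most $p$ already-colored vertices among $v_{n-k-p+1}, \dots, v_{n-k}$. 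I would set up a bipartite "system of distinct representatives'' / Hall-type argument: we must assign to the $k$ vertices of $B$ a system of $k$ distinct colors, one from each list $L(v_{n-k+i})$, respecting the at-most-$p$ forbidden colors on the first $p$ vertices and the internal adjacencies within $B$ (which also form a $P_k^p$). Since $|L(v)| = k$ and $|B| = k$, and each vertex of $B$ has at most $2p \leq 2(k-1)$ neighbors total but only the leftmost $p$ have external constraints, a careful ordering (color $v_n, v_{n-1}, \dots, v_{n-k+1}$ from right to left, or interleave) combined with Hall's theorem on the colors available should give the SDR.

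The main obstacle is the extension step, specifically arguing that one can always choose $k$ \emph{distinct} colors for $B$ — using each color once is what preserves equitability, but it is a genuine constraint, not something a greedy argument delivers for free, because after coloring some vertices of $B$ greedily we might be forced to repeat a color. I expect to handle this by a Hall's-theorem argument: view it as finding a perfect matching in the bipartite graph between $B$ and the color set $\bigcup_{v \in B} L(v)$, where $v_{n-k+i}$ is joined to every color in $L(v_{n-k+i})$ not forbidden by an external neighbor; one checks Hall's condition using $|L(v)| = k$ and the fact that only $p$ of the vertices have any external neighbors at all (so for any nonempty $S \subseteq B$, the union of available colors has size at least $k - p \cdot(\text{something}) \geq |S|$ after accounting for internal conflicts), then among matchings pick one that additionally respects the internal $P_k^p$-structure — or, more cleanly, first handle internal properness by ordering the vertices of $B$ and then only at the end invoke distinctness. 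An alternative cleaner route, which I would try first, is a direct inductive peeling of a \emph{single} vertex at a time together with a counting invariant tracking, for each color, how many times it has been used versus the running ceiling; but the block-of-$k$ approach is more transparent for equitability, so that is the one I would write up, with the Hall argument as the technical heart.
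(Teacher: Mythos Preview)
Your inductive structure is exactly the paper's: fix $k\ge p+1$, peel a block of $k$ vertices off one end of the path, apply induction to what remains, then extend.  Where you diverge is the extension step, and the worry you flag---``using each color once \ldots\ is a genuine constraint, not something a greedy argument delivers for free''---is precisely the point you are missing.  With the right ordering of the block, greedy \emph{does} deliver distinct colors for free, and that is exactly the content of Lemma~\ref{lem: pelsmajer}.  Say the block is $S=\{v_1,\dots,v_k\}$ (the first $k$ vertices; by symmetry this is the same as your last $k$).  Set $x_i=v_{k-i+1}$, so that $|N_G(x_i)-S|=\max\{0,\,p-i+1\}\le k-i$ since $k\ge p+1$.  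Now color $x_1,x_2,\dots,x_k$ greedily in that order, each time avoiding the colors already used on $S$ and the colors on neighbors outside $S$: at step $i$ you avoid at most $(i-1)+(k-i)=k-1$ colors, and one remains.  That single line is the entire proof.

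Two smaller remarks.  First, once the $k$ vertices of the block receive $k$ \emph{distinct} colors, properness on the induced $P_k^p$ is automatic---there is no separate ``internal adjacencies'' issue to handle.  Second, your Hall argument does in fact go through (the reduced list on $x_i$ has size at least $k-\max\{0,p-i+1\}$, and after reordering these sizes dominate $1,2,\dots,k$, which is the defect version of Hall), but once you have observed that inequality you have verified the hypothesis of Lemma~\ref{lem: pelsmajer} and may simply invoke it.
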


Note that $P_n^p$ is a complete graph when $n \leq p+1$ and $C_n^p$ is a complete graph when $n \leq 2p+1$.
Theorem~\ref{thm: pathpowers} is sharp because $P_n^p$ contains a copy of $K_{p+1}$ (unless $n\le p$)
so it isn't even $p$-colorable.

We next prove Conjectures~\ref{conj: KPW1} and~\ref{conj: KPW2} for powers of cycles.
Note that Theorem~\ref{thm: cyclepowers} needs $p \geq 2$ to avoid odd cycles and it needs $n \geq 2p+2$ to avoid complete graphs.

\begin{thm} \label{thm: cyclepowers}
For $p , n \in \N$ with $p \geq 2$ and $n \geq 2p+2$, $C_{n}^p$ is equitably $k$-choosable for each $k \geq 2p = \Delta(C_n^p)$.
\end{thm}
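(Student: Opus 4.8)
The plan is to give an explicit equitable $L$-coloring of $C_n^p$ for any $k$-assignment $L$ with $k \geq 2p$, proceeding by a hybrid of case analysis on $n \bmod k$ and a ``greedy plus repair'' argument that mimics the approach used for $P_n^p$ in Theorem~\ref{thm: pathpowers}. Write $V(C_n^p) = \{v_0, v_1, \dots, v_{n-1}\}$ with indices mod $n$, so that $v_i$ is adjacent to $v_j$ precisely when $1 \le |i-j| \bmod n \le p$. The key structural fact I would exploit is that $C_n^p$ has a natural ``interval-like'' decomposition: deleting any $p$ consecutive vertices leaves a copy of $P_{n-p}^p$, which is an interval graph and to which Theorem~\ref{thm: pathpowers} applies. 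So the first step is to color a well-chosen block of $p$ consecutive vertices $B = \{v_0, \dots, v_{p-1}\}$ by hand, using distinct colors (possible since $|B| = p \le k$ and in fact $p < k$ when $k \ge 2p$, giving slack), and then extend to the path power $C_n^p - B \cong P_{n-p}^p$ — but this naive approach ignores the wraparound adjacencies and the global equitability constraint, so it must be refined.

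The second and main step is to control color-class sizes. Let $q = \lceil n/k \rceil$; we must use each color at most $q$ times. I would split into the ``easy regime'' where $k \mid n$ or $n$ is large relative to $k$, and the ``tight regime'' where $q$ is small (e.g. $q = 1$, forcing a rainbow coloring, which happens only for $n \le k$, excluded since $n \ge 2p+2 > k$ is false in general — so actually $q \ge 2$ always here, a point worth verifying). For the bulk of the cycle I would lay down a periodic coloring pattern: partition $v_0, v_1, \dots$ into consecutive blocks of size $k$ and, within the $j$-th block, assign colors greedily from the lists so that no color repeats within any window of $p+1$ consecutive vertices (possible since lists have size $2p \ge p+1$, leaving room to avoid the $\le p$ already-used colors to the left). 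Because blocks have size $k > p$, colors used in block $j$ interact only with the boundary of blocks $j\pm 1$, and each color lands at most once per block, hence at most $\lceil n/k \rceil = q$ times overall. The delicate part is the last, possibly short, block together with the wraparound seam $v_{n-1} v_0$: here I would use a reserved set of colors at the two ends, or locally recolor $O(p)$ vertices near the seam using a small augmenting/swap argument, to simultaneously fix properness across the seam and avoid pushing any color above $q$.

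The hard part will be the seam. Everywhere except near $v_0$ the coloring is ``locally path-like'' and Theorem~\ref{thm: pathpowers} (or its proof technique) handles it cleanly; the genuine obstruction is reconciling (i) properness of the $2p$ wraparound edges joining $\{v_{n-p}, \dots, v_{n-1}\}$ to $\{v_0, \dots, v_{p-1}\}$, (ii) the possibility that $n$ is not a multiple of $k$ so the final block is truncated and the periodic pattern breaks, and (iii) the hard cap of $q$ on every color, which forbids the usual trick of re-using a safe color near a conflict. I expect to handle this by choosing the initial block $B$ adaptively — not a fixed rainbow coloring, but one selected after a preliminary pass so that the colors appearing near $v_0$ and near $v_{n-1}$ are disjoint and each is ``fresh'' (used $< q$ times in the interior) — and then invoking a counting argument to show such a choice exists because $k \ge 2p$ gives two disjoint ``palettes'' of size $p$ at the seam. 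A secondary technical point is ruling out small sporadic values of $n$ (those just above $2p+1$, where $q=2$ and the periodic structure has only one or two blocks); these I would dispatch by direct construction. If the adaptive-block idea proves awkward, the fallback is an alternating/augmenting-path argument in the spirit of Hajnál–Szemerédi: build an arbitrary proper equitable-ish coloring, then while some color exceeds $q$, find a Kempe-type chain rerouting one vertex to a deficient color, using $2p$-sized lists to guarantee the chain exists and $\Delta(C_n^p) = 2p \le k$ to guarantee termination.
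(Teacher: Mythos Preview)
Your sketch correctly identifies the seam as the crux, but neither of your proposed repairs actually closes it. The ``adaptive block'' idea asks you to choose colors near $v_0$ disjoint from the colors that will appear near $v_{n-1}$, but those latter colors are produced by a greedy sweep through arbitrary lists and cannot be predicted or forced by a counting argument; in particular nothing prevents the lists at the two ends from being identical, so no amount of ``palette reservation'' using only the bound $k\ge 2p$ will guarantee disjointness. The Kempe-chain fallback is worse: bichromatic swaps require both colors to lie in every list along the chain, which is exactly what list coloring does not give you, so this rescue mechanism is unavailable here. There is also a small slip earlier: $q=1$ does occur (when $k\ge n$), contrary to your parenthetical, though that case is trivial.

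The paper's proof handles the seam by a different, more targeted mechanism. For $k\ge 2p+1$ it deletes one block of $k$ consecutive vertices and applies Lemma~\ref{lem: pelsmajer} directly on top of Theorem~\ref{thm: pathpowers}. The only hard case is $k=2p$: there it deletes a set $A$ of $2k=4p$ consecutive vertices, colors $G-A$ (a path power) by Theorem~\ref{thm: pathpowers}, and must then color $A$ using each color at most twice (so that Lemma~\ref{lem: gpelsmajer} applies). It colors the outer half of $A$ first with distinct colors, then the inner half $S=\{v_1,\dots,v_{2p}\}$ with distinct colors. After deleting colors of outside neighbors, the reduced lists on $S$ satisfy $|L'(x_i)|\ge i$ for $i<2p$ but possibly only $|L'(x_{2p})|=2p-1$; Lemma~\ref{lem: cyclepowers} says a rainbow coloring still exists provided $L'(x_{2p-1})\ne L'(x_{2p})$. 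That inequality is \emph{engineered}: before coloring the outer half, one earmarks a color $c_{p+1}^*\in L(v_{p+1})\setminus L'(v_p)$ and then colors $v_{2p+1}$ to avoid it, so that $c_{p+1}^*$ survives into $L'(v_{p+1})$ and witnesses the needed asymmetry. This pre-commitment trick is the missing idea in your sketch.
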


We will use a lemma from~\cite{KP03} which was used to prove many earlier results.
If $L$ is a list assignment for a graph $G$ and $H$ is a subgraph of $G$, we will also consider $L$ to be a list assignment for $H$ by restricting $L$ to the vertices of $H$.
Let $N_G(v)$ represent the \emph{neighborhood of $v$}, which is the set of vertices in $V(G)$ adjacent to $v$ in $G$.

\begin{lem}[\cite{KP03}] \label{lem: pelsmajer} Let $G$ be a graph, and let $L$ be a $k$-assignment for $G$.  Let \\ $S=\{x_1, x_2, \ldots, x_k \}$ be a set of $k$ vertices in $G$.  If $G - S$ has an equitable $L$-coloring and
$$|N_{G}(x_i) - S| \leq k-i$$
for $1 \leq i \leq k$, then $G$ has an equitable $L$-coloring.
\end{lem}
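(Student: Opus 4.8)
The plan is to color the graph $G$ by greedily removing a well-chosen set $S$ of $k$ vertices, applying the hypothesis inductively (or directly) to an equitable $L$-coloring of $G-S$, and then extending one vertex at a time back onto $S$, using the degree bound $|N_G(x_i)-S| \le k-i$ to guarantee that each extension step has a legal color available. Concretely, I would first dispose of trivial cases: if $|V(G)| \le k$ then $S = V(G)$ works with the empty coloring of $G-S$ as a base case, so assume $|V(G)| > k$ and hence $G-S$ is a nonempty graph on which an equitable $L$-coloring is given by assumption.

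The heart of the argument is the extension step. Suppose we have an equitable $L$-coloring $f$ of $G-S$; I want to extend it to $x_k, x_{k-1}, \ldots, x_1$ in that order (i.e., color $x_k$ first). When it is time to color $x_i$, the vertices already colored among $S$ are $x_{i+1}, \ldots, x_k$, together with all of $V(G)-S$. The neighbors of $x_i$ that carry a color are therefore a subset of $(N_G(x_i) - S) \cup \{x_{i+1}, \ldots, x_k\}$, which has size at most $(k-i) + (k-i)$ — that's too weak, so the key realization is that I should not forbid colors of \emph{all} already-colored vertices of $S$, but rather argue more carefully using the color-class capacity. The right way: color $x_k, \dots, x_1$ but when coloring $x_i$ we only need to avoid the colors of its $\le k-i$ neighbors \emph{outside} $S$ plus colors already used on neighbors \emph{inside} $S$; since $|L(x_i)| = k$, as long as fewer than $k$ colors are blocked we can proceed. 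I would order $S$ so that we process $x_1$ last with the fewest external neighbors, and observe that when we reach $x_i$ the number of colored neighbors of $x_i$ in $S$ is at most $k-i$ as well only if $S$ induced few edges — which need not hold. So instead the standard trick (and the one actually used in \cite{KP03}) is: color the vertices of $S$ in the order $x_k, x_{k-1}, \ldots, x_1$, and at step $i$ note that the colors we must avoid for $x_i$ come only from $N_G(x_i) \cap (V(G)-S)$, giving at most $k-i$ forbidden colors, while adjacencies \emph{within} $S$ are handled by giving all $k$ vertices of $S$ \emph{distinct} colors — we can do this precisely because at step $i$ at most $k-i$ colors are blocked by $V(G)-S$ and at most $i-1$ colors are already used on $x_{i+1}, \ldots, x_k$... wait, that's $(k-i)+(i-1) = k-1 < k$, so a color is always available. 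That inequality $(k-i)+(i-1) = k-1$ is exactly what makes the lemma work, and is the crux of the bookkeeping.

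Finally, I must check the equitability (capacity) constraint is preserved. Here the point is that $S$ receives $k$ \emph{distinct} colors, one of each, so each color's count increases by exactly $1$ when we add $S$ back; since $G-S$ has $|V(G)|-k$ vertices and was colored equitably with $k$ colors, every color class in $G-S$ has size at most $\lceil (|V(G)|-k)/k \rceil = \lceil |V(G)|/k \rceil - 1$, and adding one vertex of each color yields classes of size at most $\lceil |V(G)|/k \rceil$, as required. I would write this out as the closing paragraph. The main obstacle — really the only subtle point — is organizing the extension so that the two sources of forbidden colors (external neighbors bounded by $k-i$, and previously-colored vertices of $S$ bounded by $i-1$) sum to at most $k-1$; once that is set up correctly, existence of an available color at each step is immediate, and the capacity check is a one-line counting argument.
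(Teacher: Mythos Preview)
The paper does not give its own proof of this lemma; it is quoted from \cite{KP03}. Your approach---extend an equitable $L$-coloring of $G-S$ by greedily assigning \emph{distinct} colors to the vertices of $S$, then observe that each color class grows by exactly one---is precisely the argument in \cite{KP03}, and your equitability check in the last paragraph is correct.

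There is, however, a slip in your bookkeeping that you should clean up. You repeatedly write the greedy order as $x_k, x_{k-1}, \ldots, x_1$ and then claim that when you reach $x_i$ ``at most $i-1$ colors are already used on $x_{i+1}, \ldots, x_k$''. But $\{x_{i+1}, \ldots, x_k\}$ has $k-i$ elements, not $i-1$; with your stated order the count is $(k-i)+(k-i)$, which you yourself noted is too weak. The correct order is $x_1, x_2, \ldots, x_k$: then when coloring $x_i$, the previously colored vertices of $S$ are $x_1, \ldots, x_{i-1}$ (that is $i-1$ of them), the hypothesis gives at most $k-i$ forbidden colors from $N_G(x_i)-S$, and the total $(i-1)+(k-i)=k-1<k=|L(x_i)|$ leaves a color available. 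Note also your remark that ``$x_1$ has the fewest external neighbors'' is backwards: the hypothesis bounds $|N_G(x_1)-S|$ by $k-1$ and $|N_G(x_k)-S|$ by $0$, so $x_1$ is the \emph{most} constrained externally and should be colored first. Once you fix the order, the proof is complete and matches the original.
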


Lemma~\ref{lem: pelsmajer} is all that is needed for the proof of Theorem~\ref{thm: pathpowers}.  However, for our proof of Theorem~\ref{thm: cyclepowers}, we have created a slightly more general lemma which is a bit trickier to apply.

For any graph $G$ and $U \subseteq V(G)$, let $G[U]$ denote \emph{the subgraph of $G$ induced by $U$}, which is the graph with vertex set $U$ and edge set $\{uv\in E(G): u\in U,v\in U\}$.

\begin{lem} \label{lem: gpelsmajer}
Let $G$ be a graph, and let $L$ be a $k$-assignment for $G$.  Let $S=\{x_1, x_2, \ldots, x_{mk} \}$ where $m \in \N$ and $x_1, x_2, \ldots, x_{mk}$ are distinct vertices in $G$.  Suppose that $c$ is an equitable $L$-coloring of $G-S$.  For each $i$ satisfying $1 \leq i \leq mk$, let

$$ L'(x_i) = L(x_i) - \{c(u) : u \in N_G(x_i) - S \}.$$

\noindent If there is a proper $L'$-coloring of $G[S]$ which has no color class of size exceeding $m$, then $G$ has an equitable $L$-coloring.
\end{lem}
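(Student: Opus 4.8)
The plan is to reduce Lemma~\ref{lem: gpelsmajer} to the case $m=1$, which is essentially Lemma~\ref{lem: pelsmajer}, by building the equitable $L$-coloring of $G$ directly from the given pieces. First I would set $f(v) = c(v)$ for all $v \in V(G) \setminus S$, and let $g$ be the given proper $L'$-coloring of $G[S]$ with no color class of size exceeding $m$; define $f(x_i) = g(x_i)$ for $1 \le i \le mk$. The key observation is that $f$ is automatically a proper $L$-coloring of $G$: within $S$ this is because $g$ is proper on $G[S]$; within $V(G) \setminus S$ because $c$ is proper on $G - S$; and across the two parts because $g(x_i) \in L'(x_i)$ excludes precisely the colors $c(u)$ for $u \in N_G(x_i) \setminus S$, so no edge from $x_i$ to $G-S$ is monochromatic. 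Also $f(x_i) = g(x_i) \in L'(x_i) \subseteq L(x_i)$ and $f(v) = c(v) \in L(v)$, so $f$ respects the lists.

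The remaining work is to verify that $f$ is \emph{equitable}, i.e., that each color appears on at most $\lceil |V(G)|/k \rceil$ vertices of $G$. Write $n = |V(G)|$, so $|V(G) \setminus S| = n - mk$. For a color $\alpha$, the number of vertices of $G$ colored $\alpha$ by $f$ is the number colored $\alpha$ by $c$ in $G-S$ plus the number of $x_i \in S$ with $g(x_i) = \alpha$. The first quantity is at most $\lceil (n-mk)/k \rceil$ since $c$ is an equitable $L$-coloring of $G-S$ (note $L$ restricted to $G-S$ is still a $k$-assignment); the second is at most $m$ by hypothesis on $g$. So the $\alpha$-class of $f$ has size at most $\lceil (n-mk)/k \rceil + m = \lceil (n-mk)/k + m \rceil = \lceil n/k \rceil$, where the last equality uses that $m$ is an integer. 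This is exactly the bound required for an equitable $L$-coloring of $G$, completing the argument.

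I expect the only real subtlety — and the main thing to get right — is the arithmetic $\lceil (n-mk)/k \rceil + m = \lceil n/k \rceil$, which relies on $mk$ being an exact multiple of $k$; this is precisely why the hypothesis asks for $|S| = mk$ rather than an arbitrary set size, and why the color-class bound on $g$ is $m$ rather than something coarser. Everything else is bookkeeping: checking that $f$ restricts correctly to the two parts and that the removal of colors in the definition of $L'(x_i)$ handles exactly the cross edges. No induction or case analysis is needed; the lemma is a direct gluing statement, and the $m=1$ instance recovers Lemma~\ref{lem: pelsmajer} in the form where $G[S]$ is colored so that each color is used at most once (equivalently, the ordering condition $|N_G(x_i) - S| \le k - i$ guarantees such a system of distinct representatives exists greedily).
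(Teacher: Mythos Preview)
Your proof is correct and follows essentially the same approach as the paper: combine $c$ on $G-S$ with the given $L'$-coloring on $G[S]$, then use the arithmetic $\lceil (n-mk)/k \rceil + m = \lceil n/k \rceil$ to bound the color class sizes. In fact, you spell out more detail than the paper, which leaves the verification that the combined coloring is proper (across the cut between $S$ and $G-S$) implicit.
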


That all happens in Section~\ref{cyclepowers}.  In Section~\ref{mainresult} we
prove our main result: we verify Conjecture~\ref{conj: LETCC} for graphs $G$ with $\Delta(G)\leq 2$.

\begin{thm}\label{thm: totalmain}
If $G$ is a multigraph with $\Delta(G) \leq 2$, then $T(G)$ is equitably $k$-choosable for each $k \geq \Delta(G)+2$.  In particular, the List Equitable Total Coloring Conjecture holds for all graphs $G$ with $\Delta(G) \leq 2$.
\end{thm}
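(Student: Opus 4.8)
My plan is first to reduce the problem. Because $T(G)$ is always a simple graph with $\Delta(T(G))=2\Delta(G)\le 4\le 7$, Theorem~\ref{thm: KKresult} already gives that $T(G)$ is equitably $k$-choosable for every $k\ge\Delta(T(G))+1$; this settles $\Delta(G)\le 1$ completely, and settles $\Delta(G)=2$ whenever $k\ge 5$. So the only remaining case is $\Delta(G)=2$ with $k=4$, and I would fix a multigraph $G$ with $\Delta(G)=2$ together with a $4$-assignment $L$ of $T(G)$ and try to produce an equitable $L$-coloring.

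Next I would record the structure of $T(G)$. Every component of $G$ is an isolated vertex, a path, a cycle, a double edge, or (if loops are allowed) a vertex with a loop; using $T(P_m)=P_{2m-1}^2$ and $T(C_n)=C_{2n}^2$ — together with $T(K_2)=K_3$, that a double edge contributes $K_4$, and that a loop contributes $K_2$ — every component of $T(G)$ is one of $K_1,K_2,K_3,K_4$, an odd path power $P_\ell^2$ with $\ell\ge 5$, or an even cycle power $C_\ell^2$ with $\ell\ge 6$. If all components have at most $4$ vertices then $\Delta(T(G))\le 3$ and Theorem~\ref{thm: KKresult} finishes, so I may assume there is a ``large'' component, a $P_\ell^2$ with $\ell\ge 5$ or a $C_\ell^2$ with $\ell\ge 6$. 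Each component is individually equitably $4$-choosable (by Theorem~\ref{thm: pathpowers}; by Theorem~\ref{thm: cyclepowers}, whose hypotheses already hold for $C_6^2$ since $6\ge 2\cdot 2+2$; or, for a clique on at most $4$ vertices, by a system of distinct representatives from its size-$4$ lists), but since equitable choosability is not inherited by disjoint unions, the real work is to glue these colorings together.

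For the gluing I would peel vertices off $T(G)$ in blocks, each peel being a legal application of Lemma~\ref{lem: pelsmajer} (block size $4$) or of Lemma~\ref{lem: gpelsmajer} with $m=2$ (block size $8$), stopping once every surviving component has at most $4$ vertices — at which point $\Delta\le 3$ and Theorem~\ref{thm: KKresult} supplies the equitable $L$-coloring to rebuild from. From a path power $P_\ell^2$ with $\ell\ge 5$ I would peel the last four vertices $v_{\ell-3},v_{\ell-2},v_{\ell-1},v_\ell$, taken in that order as $x_1,\dots,x_4$: since $v_{\ell-3}$ has two neighbors outside the block and $v_{\ell-2}$ has one, $|N(x_i)-S|\le 4-i$ holds and the remnant is $P_{\ell-4}^2$. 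From a cycle power $C_\ell^2$ (with $\ell$ large) I would peel eight consecutive vertices via Lemma~\ref{lem: gpelsmajer} with $m=2$: they induce $P_8^2$ (the long way around the cycle is never within distance $2$ of a surviving vertex, so no wrap-around edges survive), only four of the eight lists shrink and none below size $2$, and a proper coloring of this chunk with no color class of size exceeding $2$ is exactly the sort of object built in the proof of Theorem~\ref{thm: cyclepowers}; the remnant is $P_{\ell-8}^2$. Finally the clique components, the short leftovers, and the small cycle powers $C_6^2$ and $C_8^2$ get absorbed into blocks of size $4$ or $8$ together with a few vertices borrowed from the end of a neighboring path power (or from other small components, or from surrounding isolated vertices); any borrowed path-power endpoint, having at most two neighbors outside its block, is placed in the slot $x_1$, where Lemma~\ref{lem: pelsmajer} permits up to three.

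I expect the hard part to be the cycle-power step and the bookkeeping that keeps a legal block available at every stage. One must import from the proof of Theorem~\ref{thm: cyclepowers} the construction of a proper list coloring of a $P_8^2$-chunk — two of whose end lists have been shortened (to size at least $2$) — in which no color repeats more than twice; one must dispatch by hand the finitely many stubborn configurations (the cycle power $C_6^2$, which is easy when it is the only nontrivial component and needs a short ad hoc argument otherwise; small unions such as a $C_6^2$ beside a few isolated vertices; and cases where $\ell-8$ is too small for $P_{\ell-8}^2$ to behave like a generic path power); and one must check that, however the remaining components are distributed, there is always a next block to peel — the worst scenario being a lone small cycle power stranded among isolated vertices, which forces a block straddling them. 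None of this needs an idea beyond Lemmas~\ref{lem: pelsmajer} and~\ref{lem: gpelsmajer} and Theorems~\ref{thm: pathpowers} and~\ref{thm: cyclepowers}, but verifying it carefully is where the length of the proof lies.
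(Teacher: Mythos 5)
Your skeleton matches the paper's strategy almost exactly: reduce via Theorem~\ref{thm: KKresult} to $\Delta(G)=2$, $k=4$; observe that the components of $T(G)$ are squares of paths, squares of cycles on at least $6$ vertices, and copies of $K_4$; peel blocks of size $4$ with Lemma~\ref{lem: pelsmajer} (path ends, $K_4$'s) and blocks of size $8$ with Lemma~\ref{lem: gpelsmajer}, importing the chunk-coloring construction from the proof of Theorem~\ref{thm: cyclepowers} for long cycle squares. The problem is that everything you defer to ``a short ad hoc argument'' or ``dispatching the finitely many stubborn configurations by hand'' is precisely where the paper's proof does its real work, and it is not short or routine. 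The critical configuration is a graph whose nontrivial components are all small cycle squares, e.g.\ $T(G)$ a disjoint union of several copies of $C_6^2$ (take $G$ to be a union of triangles). No size-$4$ block is legal there: every $4$-subset of a $C_6^2$ leaves each of its vertices with at least one outside neighbor, so the $|N(x_4)-S|\le 0$ condition of Lemma~\ref{lem: pelsmajer} fails, and a single borrowed vertex of another $C_6^2$ has four outside neighbors, which exceeds even the $x_1$ allowance. So you are forced into a size-$8$ block consisting of one whole $C_6^2$ plus two adjacent vertices of another, and then Lemma~\ref{lem: gpelsmajer} with $m=2$ demands a proper $L'$-coloring of the block with every class of size at most $2$: the two borrowed vertices may have their lists cut to (distinct) singletons, and you must then color the full $C_6^2$ from its size-$4$ lists so that each of those two prescribed colors is used at most once on it. That is not automatic --- you cannot simply delete both colors from all six lists, since $\chi_l(C_6^2)=3$ and that would leave lists of size $2$ --- and it is exactly the point where the paper introduces its careful trimming argument exploiting that the only independent sets of size greater than $1$ in $C_6^2$ are the three antipodal pairs (the $n=6$, $m\ge 2$ subcase of Theorem~\ref{lem: mainlemma}, run through Lemma~\ref{lem: gpelsmajer} with a $12$-vertex block and $m=3$).

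The same unproved step recurs in your other ``borrowing'' situations (e.g.\ a $C_6^2$ together with a $K_3$-component or a short path square inside one block: a color used twice on the cycle square and once on the borrowed vertices already violates $m=2$), and your claim that ``there is always a next block to peel'' is exactly the bookkeeping the paper avoids by merging all path-square components into a single path square $Q$ and inducting on $|V(G)|$. So while your reduction and your treatment of long cycle squares and path squares are sound and coincide with the paper's, the proposal as written has a genuine gap: the equitable coloring of disjoint unions dominated by copies of $C_6^2$ --- the case that makes this theorem nontrivial --- is asserted rather than proved, and closing it requires the specific list-trimming idea (or an equivalent) that you have not supplied.
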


If $G$ is a graph with $\Delta(G) \leq 2$, then its components are paths and cycles.  Recall that $T(P_m)= P_{2m-1}^2$ and $T(C_n) = C_{2n}^2$. Thus, by the end of Section~\ref{cyclepowers} we will already have shown Theorem~\ref{thm: totalmain} for connected graphs and linear forests, but that does not suffice:
recall that in general, it isn't enough to prove equitable $k$-choosability for every component of a graph.
The disconnected case of the proof of Theorem~\ref{thm: totalmain} will require heavy use of Lemma~\ref{lem: gpelsmajer} and further ideas.
We should note that Theorem~\ref{thm: totalmain} is obvious when $\Delta(G)$ is $0$ or $1$ and the $k\ge 5$ case follows from Theorem~\ref{thm: KKresult}, so we need only consider the case where $\Delta(G)=2$ and $k=4$.

We will actually prove something a bit stronger than Theorem~\ref{thm: totalmain} in Section~\ref{mainresult}.
Note that for multigraphs $G$ with $\Delta(G)\le 2$, each component of $T(G)$ is a square of an odd path, a square of an even cycle on at least 6 vertices, or a copy of $K_4$.  We will prove equitable $4$-choosability for graphs where components may be square of \emph{any} path, square of \emph{any} cycle on at least 6 vertices, or a copy of $K_4$, and we also prove equitable $3$-choosability for graphs where each component is a square of a cycle of length divisible by~3 or a square of a path (Theorem~\ref{lem: mainlemma}). So we find exactly which total graphs $T(G)$ with $\Delta(G)=2$ are equitably 3-choosable.

Finally, we mention that Conjecture~\ref{conj: LETCC} has also been proved for trees of maximum degree 3, stars, and double stars~\cite{M18}, but for the sake of brevity, we have not included these results in this paper.

\section{Equitable Choosability of Powers of Paths and Cycles} \label{cyclepowers}

List coloring powers of cycles and paths is well understood.  It is easy to see that $\chi_l(P_n^p) = p+1$ whenever $1 \leq p \leq n-1$.  Prowse and Woodall~\cite{PW03} determined the chromatic number for all cycle powers and showed that powers of cycles are chromatic-choosable.

We begin with a straightforward application of Lemma~\ref{lem: pelsmajer}.


\begin{proof}[Proof of Theorem~\ref{thm: pathpowers}]
We will use induction on $n$, with $p$ and $k$ fixed.
Let $L$ be any $k$-assignment for $G=P_n^p$.
If $n\leq k$, it suffices to greedily $L$-color vertices with distinct colors.

Suppose that $n > k$ and that the desired result holds for natural numbers less than $n$.  Label the vertices $v_1, v_2, \ldots, v_n$ in order taken from the underlying path $P_n$.  Now, let $x_i = v_{k-i+1}$ for each $1 \leq i \leq k$.  We let $S=\{x_1, x_2, \ldots, x_k \}$.  Note that $|N_{G}(x_i) - S|=\max\{0,p-i+1\}\leq k-i$ for each $x_i\in S$.
The inductive hypothesis guarantees that $G-S$ has an equitable $L$-coloring.  Applying
Lemma~\ref{lem: pelsmajer} completes the proof.
\end{proof}

%
%

Next, we work toward proving Theorem~\ref{thm: cyclepowers}. We begin with the proof of Lemma~\ref{lem: gpelsmajer}.

\begin{proof}[Proof of Lemma~\ref{lem: gpelsmajer}]
Let $n=|V(G)|$.  Then each color class associated with $c$ has size at most $\lceil (n-mk)/k \rceil = \lceil n/k \rceil - m$.  Suppose that $c'$ is a proper $L'$-coloring of $G[S]$ which uses no color more than $m$ times.  If we combine $c$ and $c'$, we get a proper $L$-coloring of $G$ such that each color class has size at most $\left \lceil n/k \right \rceil$, as required.
\end{proof}

The next lemma is a straightforward fact.  We use $\alpha(G)$ to denote the \emph{independence number} of $G$, which is the size of the largest independent set of vertices in $G$.

\begin{lem} \label{lem: independent}
Suppose that $G$ is a graph with $n$ vertices, and suppose $k$ is a positive integer such that $\alpha(G) \leq  \lceil n/k \rceil$ and $\chi_l(G) \leq k$.  Then, $G$ is equitably $k$-choosable.
\end{lem}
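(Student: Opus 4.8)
The plan is to establish this by a direct greedy-type argument: take an arbitrary $k$-assignment $L$ for $G$, produce a proper $L$-coloring using $\chi_l(G) \leq k$, and then argue that the color-class size restriction is automatically satisfied. The key observation is that every color class of a proper coloring is an independent set, so each color class has size at most $\alpha(G)$. Since we are given $\alpha(G) \leq \lceil n/k \rceil$, any proper $L$-coloring whatsoever is already equitable in the required sense, with no color used more than $\lceil |V(G)|/k \rceil$ times.

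First I would let $L$ be an arbitrary $k$-assignment for $G$. Since $\chi_l(G) \leq k$, there exists a proper $L$-coloring $f$ of $G$. Next I would observe that for each color $a$, the set $f^{-1}(a)$ is an independent set in $G$ (two vertices receiving the same color cannot be adjacent in a proper coloring), so $|f^{-1}(a)| \leq \alpha(G)$. Combining this with the hypothesis $\alpha(G) \leq \lceil n/k \rceil$ gives $|f^{-1}(a)| \leq \lceil n/k \rceil = \lceil |V(G)|/k \rceil$ for every color $a$. Hence $f$ is an equitable $L$-coloring of $G$, and since $L$ was arbitrary, $G$ is equitably $k$-choosable.

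This lemma has no real obstacle — it is purely bookkeeping, packaging the trivial bound ``color class $\subseteq$ independent set'' together with the list-chromatic hypothesis so that it can be invoked cleanly later (presumably for the base cases or small-graph cases in the proof of Theorem~\ref{thm: cyclepowers}). The only thing to be slightly careful about is that the definition of equitable $L$-coloring requires the upper bound $\lceil |V(G)|/k \rceil$ per color but imposes no lower bound, so there is nothing further to check once the upper bound is verified.
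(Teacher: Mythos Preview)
Your proof is correct and follows essentially the same argument as the paper: take an arbitrary $k$-assignment, use $\chi_l(G)\le k$ to get a proper $L$-coloring, and observe that every color class is independent and hence of size at most $\alpha(G)\le\lceil n/k\rceil$. There is nothing to add.
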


\begin{proof}
Suppose that $L$ is an any $k$-assignment for $G$.  Since $\chi_l(G) \leq k$, we know that there is a proper $L$-coloring, $c$, of $G$.  Moreover, since each of the color classes associated with $c$ are independent sets, we know that there is no color classes associated with $c$ of size exceeding $\alpha(G)$.  Since $\alpha(G) \leq  \lceil n/k \rceil$, we have that $c$ is an equitable $L$-coloring of $G$.
\end{proof}

Now, we extend the basic idea of Lemma~\ref{lem: pelsmajer} to apply when the bound $|N_G(x_i) - S|\leq k-i$ isn't quite satisfied.
To do that, it's not enough to look at the \emph{number} of colors remaining for each $x_i\in S$ after removing colors on neighbors
not in $S$; we need to actually look at \emph{which colors} remain available for each $x_i\in S$.

\begin{lem} \label{lem: cyclepowers}
Suppose $H$ is a graph with $V(H) = \{x_i : 1 \leq i \leq k \}$.  Suppose $L$ is a $k$-assignment for $H$ such that $|L(x_i)| \geq i$ for $1 \leq i \leq k-1$ and $|L(x_k)| \geq k-1$.  If $|L(x_k)| \geq k$ or $L(x_{k-1}) \neq L(x_k)$, then $H$ is $L$-colorable with $k$ distinct colors.
\end{lem}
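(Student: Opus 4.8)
The plan is to color the vertices $x_k, x_{k-1}, \ldots, x_1$ in that order (i.e. in decreasing order of index), so that when we reach $x_i$ we have already committed colors to $x_{i+1}, \ldots, x_k$ and must avoid at most $k-i$ forbidden colors from those already-colored vertices. Since $|L(x_i)| \geq i$ for $1 \leq i \leq k-1$, this leaves at least $i - (k-i)$... which is not positive in general, so a naive greedy argument in this order fails; the point of the lemma's hypotheses is exactly to rescue the first two steps. First I would color $x_k$ and $x_{k-1}$: if $|L(x_k)| \geq k$ we color $x_k$ arbitrarily from its list and then $x_{k-1}$ from $L(x_{k-1}) \setminus \{c(x_k)\}$, which is nonempty because $|L(x_{k-1})| \geq k-1 \geq 2$; if instead $L(x_{k-1}) \neq L(x_k)$ (with both of size exactly $k-1$), pick a color in $L(x_k) \setminus L(x_{k-1})$ for $x_k$ — nonempty since the two lists are distinct sets of the same size — and then any remaining color in $L(x_{k-1})$, which is now disjoint from the forbidden color, for $x_{k-1}$. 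Either way we have colored $x_k$ and $x_{k-1}$ with two distinct colors.

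Next, I would handle the remaining vertices $x_{k-2}, x_{k-3}, \ldots, x_1$ greedily in that order. When coloring $x_i$ (for $i \leq k-2$), the already-colored vertices with larger index are $x_{i+1}, \ldots, x_k$, a set of size $k - i$, so at most $k-i$ colors are forbidden. Since $|L(x_i)| \geq i$, this step succeeds provided $i > k - i$ — but that can fail. The correct accounting is that $H$ has only $k$ vertices total, and $x_i$ is adjacent in $H$ to at most the other $k-1$ vertices; more precisely, of the vertices $x_{i+1}, \ldots, x_k$ that are already colored, they received distinct colors only if pairwise adjacent, which need not hold. So the genuine bound is that the number of distinct colors appearing on $N_H(x_i) \cap \{x_{i+1}, \ldots, x_k\}$ is at most $\deg_H(x_i)$, and I would instead argue as in Lemma~\ref{lem: pelsmajer}: reorder or simply observe that each $x_i$ has at most $k - i$ neighbors among $\{x_{i+1},\dots,x_k\}$ is NOT assumed, so actually the clean route is to invoke the structure differently. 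Let me restate: the safe claim is that after coloring $x_k, x_{k-1}$ with distinct colors, we process $x_{k-2}, \dots, x_1$; at step $x_i$ the set $\{x_{i+1}, \dots, x_k\}$ has $k-i$ members, contributing at most $k-i$ forbidden colors, and $|L(x_i)| \geq i \geq k - (k-i)$ forces $i \geq k-i$ only when $i \geq k/2$. For $i < k/2$ we need the distinctness already gained: actually the resolution is that we only ever forbid as many colors as there are distinctly-colored earlier vertices, and I will show by a counting/greedy argument that the $x_k, x_{k-1}$ pair being bi-chromatic makes the induction go through, possibly by noting $x_1$ needs only $|L(x_1)| \geq 1$ and working outward.

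The main obstacle, and the step I expect to require the most care, is precisely this bookkeeping for the low-index vertices $x_i$ with $i < k/2$, where $|L(x_i)| = i$ is small but there are $k - i$ previously colored vertices: I will need to use that those $k-i$ vertices do not all receive distinct colors, i.e. leverage that $H$ is a graph on only $k$ vertices so many of the $x_j$ with $j > i$ are mutually nonadjacent and reuse colors. Concretely I would argue: when we reach $x_i$, the vertices $x_{i+1}, \dots, x_k$ have been colored using at most... — here one uses that in a proper coloring of an $\ell$-vertex graph the number of colors is at most $\ell$, but we need at most $k-i$, which holds trivially since there are $k-i$ of them — so this gives nothing new, and the real fix is to color in the order $x_1, x_2, \dots, x_k$ instead, where $|L(x_i)| \geq i$ exactly matches "at most $i-1$ earlier vertices," so greedy works for $x_1$ through $x_{k-1}$, leaving $x_k$ which has $|L(x_k)| \geq k-1$ neighbors-worth of constraints from $x_1, \dots, x_{k-1}$; if those $k-1$ vertices used $< k-1$ distinct colors or $|L(x_k)| \geq k$ we finish immediately, and the only bad case is $k-1$ distinct colors used and $|L(x_k)| = k-1$, which I would escape by the hypothesis $L(x_{k-1}) \neq L(x_k)$: recolor $x_{k-1}$ (whose constraints came from $x_1, \dots, x_{k-2}$, at most $k-2$ colors, with $|L(x_{k-1})| \geq k-1$, so it has a second available color) to free up a color in $L(x_k)$. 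That swap argument is the crux and is what the distinctness hypothesis is designed to enable.
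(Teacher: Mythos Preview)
Your proposal wanders badly before settling on the right idea, and the settled version still has a real error in its justification.

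First, a clarifying observation you never make: the conclusion asks for an $L$-coloring of $k$ vertices using $k$ \emph{distinct} colors. Any such assignment is automatically proper regardless of the edges of $H$, so the adjacency discussion in your middle paragraphs is irrelevant and should be dropped entirely. This is why the paper's proof never mentions edges.

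Your final approach---greedily color $x_1,\dots,x_{k-1}$ with distinct colors, then try $x_k$, swapping at $x_{k-1}$ if blocked---is essentially the paper's idea, but your key step is misargued. You write that $x_{k-1}$ ``has a second available color'' because $|L(x_{k-1})|\ge k-1$ and there are at most $k-2$ constraints from $x_1,\dots,x_{k-2}$. That subtraction yields only $\ge 1$, not $\ge 2$; it may be that the sole color in $L(x_{k-1})\setminus C$ is the one already on $x_{k-1}$, and then there is nothing to swap to. You also need more than ``a second available color'': the replacement color $c'$ must avoid \emph{all} of $C\cup\{c\}$, i.e.\ you need $c'\in L(x_{k-1})\setminus L(x_k)$ (in the blocked case the used colors are exactly $L(x_k)$). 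The existence of such $c'$ follows from $|L(x_{k-1})|\ge k-1=|L(x_k)|$ together with $L(x_{k-1})\neq L(x_k)$---this is where the hypothesis actually enters, not merely as a label on the case.

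The paper's proof avoids the recoloring altogether and is cleaner: color $x_1,\dots,x_{k-2}$ greedily with distinct colors $C$, set $L'(x_i)=L(x_i)\setminus C$ for $i=k-1,k$, observe $|L'(x_i)|\ge 1$, and note that if both have size exactly~$1$ then $C\subset L(x_{k-1})\cap L(x_k)$ forces $L'(x_{k-1})\neq L'(x_k)$ by the hypothesis, so the last two vertices can be finished with distinct colors directly.
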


\begin{proof}
The result is obvious when $|L(x_k)| \geq k$.  So, suppose that $|L(x_k)| = k-1$ and $L(x_{k-1}) \neq L(x_k)$.  By the restrictions on the list sizes, we can greedily color the vertices $x_1, x_2, \ldots, x_{k-2}$ with $k-2$ distinct colors.  Let $C$ be the set of $k-2$ colors used to color these vertices, and let $L'(v_i) = L(v_i)-C$ for $i=k-1, k$.  Note that $|L'(v_i)| \geq 1$ for $i=k-1, k$, and we are done if $L'(v_{k-1})$ or $L'(v_k)$ have two or more elements.  So, assume $|L'(v_i)| = 1$ for $i=k-1,k$.  Then, $C \subset L(v_i)$ for $i = k-1, k$, and since $L(x_{k-1}) \neq L(x_k)$, we know $L'(x_{k-1}) \neq L'(x_k)$.  Thus, we can complete a proper $L$-coloring of $H$ with $k$ distinct colors.
\end{proof}

We now prove Theorem~\ref{thm: cyclepowers}.


\begin{proof}[Proof of Theorem~\ref{thm: cyclepowers}]
Let $G=C_n^p$, and $V(G) = \{v_1, v_2, \ldots, v_n \}$ where the vertices are written in cyclic order based upon the underlying $C_n$ used to form $G$. Let $L$ be an arbitrary $k$-assignment for $G$.  If $k \geq n$, we can obtain an equitable $L$-coloring of $G$ by coloring the vertices of $G$ with $n$ distinct colors.  Thus, we may assume that $n \geq k+1$.

\par

If $k \geq 2p+1$, we will apply Lemma~\ref{lem: pelsmajer}.  Let $S = \{v_i : 1 \leq i \leq k \}$.  Then, for $1 \leq i \leq p+1$, $|N_G(v_i) - S| \leq p -i+1$.  For $p+2 \leq i \leq k$, $|N_G(v_i) - S| \leq p$.  Now, let $x_i = v_{k+1-i}$ for $1 \leq i \leq k-p-1$, and let $x_{k-p} = v_1, x_{k-p-1} = v_2, \ldots, x_k = v_{p+1}$.  It is now easy to check $|N_G(x_i) - S| \leq k - i$ for $1 \leq i \leq k$.  Since $G-S$ is the square of a path, Theorem~\ref{thm: pathpowers} implies $G-S$ has an equitable $L$-coloring.  Lemma~\ref{lem: pelsmajer} then implies that $G$ has an equitable $L$-coloring.  So, we assume that $k=2p$.

\par

Suppose that $n \geq 2p+3$.  Let $S = \{v_i : 1 \leq i \leq 2p \}$, $S' = \{v_i : 2p+3 \leq i \leq \min\{4p,n\} \}$, and $S'' = \{v_{2p+1}, v_{2p+2} \}$.  We let $A = S \cup S' \cup S''$.  Note that if $n \geq 4p+1$, $H = G - A$ is the square of a path and Theorem~\ref{thm: pathpowers} implies there is an equitable $L$-coloring of $H$.  Suppose we color $H$ according to an equitable $L$-coloring.  With the intent of using Lemma~\ref{lem: gpelsmajer}, we will show that we can find a proper $L$-coloring of $G[A]$ which uses no color more than twice, and is not in conflict with the coloring used for $H$.

\par

We claim we can greedily $L$-color the vertices in $S'$ in reverse order with distinct colors while avoiding colors already used on neighbors of each vertex.  Specifically, the $i$th vertex in this order for $1 \leq i \leq p$ must avoid $i-1$ colors already used on $S'$ and up to $p-i+1$ colors already used on $V(H)$, and there are $k > p = (i-1)+(p-i+1)$ colors choices available.  Each remaining vertex of $S'$ must only avoid the colors already used on vertices in $S'$ which is doable since $k > 2p-2 \geq |S'|$.  Thus, $S'$ is colored with distinct colors, and we call the proper $L$-coloring of $G - (S \cup S'')$ we have thus far $c$.

\par

Note that $v_p$ has exactly one neighbor that is already colored, $v_n$.  Let $L'(v_p) = L(v_p) - \{c(v_n) \}$.  If $|L'(v_p)| = 2p-1$, we can let $c_{p+1}^*$ be an element in $L(v_{p+1}) - L'(v_p)$.  Otherwise, let $c_{p+1}^*$ be any element of $L(v_{p+1})$ (note we have not colored $v_{p+1}$ yet).

\par

We will now color the vertices in $S''$.  Clearly, all their neighbors that have been colored are in $S'$ since $2p+2+p \leq 4p$.  Since $|S'| \leq 2p-2$ and $|L(v_{2p+1})| \geq 2p$, we can color $v_{2p+1}$ with a color in $L(v_{2p+1})$ which avoids the colors used on $S'$ and avoids the color $c_{p+1}^*$.  Then, we can color $v_{2p+2}$ with a color in $L(v_{2p+2})$ which avoids the colors used on $S' \cup \{v_{2p+1} \}$.  We have now colored $S' \cup S''$ with $2p$ distinct colors.

\par

Now, for each $v_i \in S$ let $L'(v_i)$ be equal to $L(v_i)$ after removing all colors used on neighbors of $v_i$.  Notice that $c_{p+1}^* \in L'(v_{p+1})$.  Also, for $1 \leq i \leq p$, $|L'(v_i)| \geq p+i-1$ and $|L'(v_{2p-i+1})| \geq p+i-1$.  With the intent of applying Lemma~\ref{lem: cyclepowers}, we rename the vertices of $S'$ as follows
$$x_1=v_{2p}, x_2 = v_1, x_3 = v_{2p-1}, x_4= v_2, \ldots, x_{2p-1}=v_{p+1}, x_{2p}= v_p.$$
If $|L'(v_p)|=2p-1$, then $c_{p+1}^* \in L'(v_{p+1}) - L'(v_p)$.  So, $L'(v_p) \neq L'(v_{p+1})$.  Thus, Lemma~\ref{lem: cyclepowers} applies, and we can find a proper $L'$-coloring of $G[S]$ which uses $2p$ distinct colors.  By Lemma~\ref{lem: gpelsmajer}, the case where $n \geq 2p+3$ is complete.

\par

Finally, when $n =2p+2$, since $G$ is not a complete graph or odd cycle, $\chi_l(G) \leq \Delta(G) = 2p$ by the Vizing's extension of Brooks' Theorem~\cite{V76}.  Also, $ \alpha(G) = \lfloor n/(p+1) \rfloor = 2 = \lceil n/(2p) \rceil$. So, Lemma~\ref{lem: independent} implies that there is an equitable $L$-coloring of $G$. (Note that this last argument actually works whenever $2p+2 \leq n \leq 3p+2$ and $k=2p$.)
\end{proof}

\section{Maximum Degree Two} \label{mainresult}

In this section we prove our main result, Theorem~\ref{thm: totalmain}.  The difficulty of the proof is that we must deal with disconnected graphs.  We heavily rely on Theorem~\ref{thm: KKresult} and Lemma~\ref{lem: gpelsmajer} to prove this result, which we restate here.

\begin{customthm}{\bf \ref{thm: totalmain}}
If $G$ is a multigraph with $\Delta(G) \leq 2$, then $T(G)$ is equitably $k$-choosable for each $k \geq \Delta(G)+2$.  In particular, the List Equitable Total Coloring Conjecture holds for all graphs $G$ with $\Delta(G) \leq 2$.
\end{customthm}

Theorem~\ref{thm: totalmain} is obvious when $\Delta(G)=0$.  When $\Delta(G) = 1$, the graph $T(G)$ consists of disjoint copies of $K_3$ and isolated vertices (and there must be at least one copy of $K_3$), so $\Delta(T(G))=2$ and Theorem~\ref{thm: KKresult} implies that $T(G)$ is equitably $k$-choosable for each $k \geq 3 = \Delta(G)+2$ (although one could instead give an inductive proof using Lemma~\ref{lem: pelsmajer}).

\par

When $G$ is a multigraph (or simple graph) with $\Delta(G)=2$, proving Theorem~\ref{thm: totalmain} is not as straightforward. In this case $T(G)$ consists of the disjoint union of squares of cycles on at least 6 vertices, squares of paths, and copies of $K_4$.  Theorem~\ref{thm: KKresult} tells us that $T(G)$ is equitably $k$-choosable for each $k \geq \Delta(T(G))+1$, but $\Delta(T(G))+1$ may be as large as 5.  So, we still need to show that $T(G)$ is equitably 4-choosable in this case.  From the previous section, we know that squares of cycles of order at least 6, squares of paths, and copies of $K_4$ are all equitably 4-choosable.  However, one should recall from Section~\ref{ec} that this does not necessarily imply that the disjoint union of such graphs will be equitably $4$-choosable.  Overcoming this obstacle is the main difficulty in the proof of Theorem~\ref{thm: totalmain}.

\par

Theorem~\ref{lem: mainlemma} will complete the proof of Theorem~\ref{thm: totalmain}. In the proof we will use the fact from Prowse and Woodall~\cite{PW03} that when $n \geq 6$, $\chi_l(C_n^2)=3$ if and only if 3 divides $n$ and $\chi_l(C_n^2)=4$ otherwise.

\begin{thm} \label{lem: mainlemma}
Suppose that $G$ is a graph with components that are squares of paths, squares of cycles on at least 6 vertices, and copies of $K_4$.  Then $G$ is equitably $4$-choosable.  Moreover, $G$ is equitably 3-choosable if and only if its components consist of no copies of $K_4$ and all of its components that are squares of cycles have order divisible by 3.

\end{thm}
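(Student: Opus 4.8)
The plan is to handle the easy ``3-choosable'' direction first, then the ``4-choosable'' statement and the hard direction of the ``3-choosable'' characterization together, using Lemma~\ref{lem: gpelsmajer} to peel off components one at a time. For the forward direction of the 3-choosability characterization: a copy of $K_4$ needs at least $4$ colors, and a square of a cycle $C_n^2$ with $3\nmid n$ has $\chi_l(C_n^2)=4$ by Prowse--Woodall, so such a component alone fails to be $3$-choosable; since equitable choosability is inherited downward only in a limited way, I would argue directly that a graph with such a component admits a (non-equitable-friendly) $3$-assignment with no proper coloring at all, hence certainly no equitable one. This rules out exactly the forbidden configurations.

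For the main content, fix a $k$-assignment $L$ ($k=3$ or $k=4$) and write $G$ as the disjoint union of its components $G_1,\dots,G_t$, each a square of a path, a square of a cycle of appropriate order, or $K_4$ (the last only when $k=4$). The key reduction: by Lemma~\ref{lem: gpelsmajer}, if I can find in one component $G_j$ a set $S$ of $mk$ vertices and, after equitably $L$-coloring $G-S$ (the rest of $G_j$ together with all other components), a proper $L'$-coloring of $G_j[S]$ with no color class exceeding $m$, then $G$ has an equitable $L$-coloring. So the strategy is induction on $|V(G)|$: choose a component, choose a ``trimmable'' block of $mk$ vertices inside it whose neighborhoods outside $S$ are small enough (for squares of paths/cycles, a segment of $mk$ consecutive vertices at the end of a path, or a full short cycle/path component taken wholesale when $|V(G_j)|$ itself is a multiple of $k$), apply the inductive hypothesis to the smaller graph, and then recolor the block. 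The arithmetic to make this work mirrors the proofs of Theorems~\ref{thm: pathpowers} and~\ref{thm: cyclepowers}: for a path-square we can always shave off $k$ vertices from an end since $|N_G(x_i)-S|=\max\{0,p-i+1\}\le k-i$ with $p=2$; for a cycle-square the obstruction is exactly the situation already resolved in the proof of Theorem~\ref{thm: cyclepowers} via Lemma~\ref{lem: cyclepowers}, and for $K_4$ we take the whole component ($m=1$, $k=4$) and need a proper $L'$-coloring of $K_4$ with all color classes of size $1$, i.e.\ a rainbow coloring, which exists since after deleting the (empty) external neighborhood $L'=L$ has all lists of size $4$.

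The subtlety — and the step I expect to be the main obstacle — is the bookkeeping when \emph{no} single component is large enough to trim $k$ vertices off while keeping the rest nonempty and legally colorable, i.e.\ when every component is ``small'' (a $K_4$, or a short path-square, or a cycle-square on, say, $6$, $9$, $12$ vertices for $k=3$). Here I cannot reduce within a component, so instead I would take $S$ to be an entire small component (or a union of a controlled number of small components whose total size is a multiple of $k$), in which case $G[S]$ has no edges to $G-S$, so $L'=L$ on $S$, and I just need a proper equitable-within-$S$ coloring of that small piece from its own lists — which is precisely the fact that $K_4$, $C_6^2,C_9^2,\dots$, and short path-squares are themselves equitably $k$-choosable, proved in Section~\ref{cyclepowers} and via Theorem~\ref{thm: KKresult}. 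The delicate point is choosing how many small components to bundle into $S$ so that $|S|$ is divisible by $k$ \emph{and} the number of color classes of size $m$ in the combined coloring does not exceed what Lemma~\ref{lem: gpelsmajer} allows; this is a number-theoretic matching between the residues $|V(G_j)| \bmod k$ of the small components, and it is where the hypothesis ``cycle-squares have order divisible by $3$'' is used in the $k=3$ case (each such component contributes $0 \bmod 3$, so bundling is trivial), whereas for $k=4$ one must argue that any multiset of residues from $\{|K_4|=4\equiv 0,\ |C_n^2|,\ |P_n^2|\}$ can be partitioned into blocks summing to a multiple of $4$, or else a single path-square is large enough to absorb the slack. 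Carrying out this residue analysis cleanly, and verifying in each bundling case that the equitable bound is met, is the crux; the rest is the routine segment-trimming already established for connected powers of paths and cycles.
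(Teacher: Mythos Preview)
Your overall inductive scheme matches the paper's, but the small-component case has a genuine gap. You propose to take $S$ equal to a bundle of whole small components with $|S|$ a multiple of $k$, and then say one ``just need[s] a proper equitable-within-$S$ coloring of that small piece from its own lists --- which is precisely the fact that $K_4$, $C_6^2, C_9^2, \dots$, and short path-squares are themselves equitably $k$-choosable.'' This is the error: equitable $k$-choosability of each component of $S$ does \emph{not} yield a coloring of $G[S]$ with all color classes of size at most $m=|S|/k$. With $k=4$ and $S=C_6^2+C_6^2$ one has $m=3$; each copy is equitably $4$-choosable (classes of size $\le 2$), but the same color could land in a size-$2$ class in both copies, giving a class of size $4>m$ in $S$. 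So the crux is not residue matching at all --- having $|S|\equiv 0\pmod{k}$ is needed to invoke Lemma~\ref{lem: gpelsmajer} but says nothing about \emph{which} colors get repeated across the components of $S$. Your reduction of the difficulty to ``a number-theoretic matching between the residues $|V(G_j)|\bmod k$'' is therefore a non sequitur; the missing ingredient is a mechanism to coordinate colors across components, and none is proposed.

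The paper resolves this differently. It first merges all path-square components into a single path-square $Q$ (adding edges can only make equitable choosability harder), which eliminates your unaddressed issue of several small path-squares and makes the $k=3$ converse a one-line sum $\lceil |V(Q)|/3\rceil+\sum_i |V(R_i)|/3=\lceil |V(G)|/3\rceil$. For $k=4$, after stripping off $K_4$'s and reducing to $|V(Q)|\le 3$, it takes the largest cycle-square $R_1$: if $|V(R_1)|\ge 8$ it trims $8$ vertices exactly as in Theorem~\ref{thm: cyclepowers}, and if $|V(R_1)|\in\{6,7\}$ it performs an explicit \emph{color coordination} between $R_1$ and a second component (either $R_2$ or the short $Q$). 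Concretely, it colors $R_1$ first, records the set $C$ of colors that appear twice on $R_1$, and then deletes colors of $C$ from selected lists on the second component so that the reduced lists still have size $\ge 3$ and any proper coloring from them uses each color of $C$ at most once there. That cross-component list manipulation --- not a residue argument --- is the idea your plan is missing.
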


Theorem~\ref{lem: mainlemma} is stronger than Theorem~\ref{thm: totalmain} in a couple ways.  One, it includes many graphs that are not total graphs.  Two, it tells us exactly which total graphs $T(G)$ with $\Delta(G)=2$ are equitably 3-choosable.

\begin{proof}
If there are any components that are squares of paths, let $Q$ be their union.
Since the underlying paths form a spanning subgraph of a single path, we can add edges to obtain the square of a path.
Adding edges can only make the problem more difficult, so we may assume that $Q$ is a square of a path.
Also, let $R_1,\ldots,R_m$ be the squares of cycles.

We will prove the second part first.  If $G$ is equitably 3-choosable, then each of its components are 3-choosable.  Since $\chi_l(K_4)=4$ and $\chi_l(C_n^2)=4$ when 3 does not divide $n$, we know that the components of $G$ consist of no copies of $K_4$, and all of $G$'s components that are squares of cycles have order divisible by 3.

Conversely, suppose each $|V(R_i)|$ is divisible by~3 and $G=Q+\sum_{i=1}^m R_i$.
Let $L$ be an arbitrary 3-assignment for $G$.  Since $\chi_l(R_i)=3$ and $\alpha(R_i)=|V(R_i)|/3$,
Lemma~\ref{lem: independent} yields a proper $L$-coloring of $R_i$ with color classes of size at most $|V(R_i)|/3$,
for each $i$. By Theorem~\ref{thm: pathpowers}, $Q$ has a proper $L$-coloring with color classes of size at most $\lceil |V(Q)|/3 \rceil$.
Combining these colorings gives a proper $L$-coloring of $G$ with color classes of size at most
$$\lceil |V(Q)|/3 \rceil +\sum_{i=1}^m |V(R_i)|/3 = \left\lceil{ |V(Q)|/3+\sum_{i=1}^m |V(R_i)|/3}\right\rceil= \left\lceil{|V(G)/3}\right\rceil$$
since $|V(R_i)|/3$ is an integer for each $i$.  Thus, $G$ is equitably $3$-choosable.

Now we prove the first part by induction on $|V(G)|$.  The result is clear when $|V(G)| \leq 4$.  So, suppose $|V(G)| \geq 5$ and the result holds for graphs on fewer than $|V(G)|$ vertices.  Let $L$ be any $4$-assignment for $G$.
If $|V(Q)|\ge 4$, delete four vertices from one end, and apply induction to $L$-color the rest of the graph.  Then apply Lemma~\ref{lem: pelsmajer} to extend the $L$-coloring to $G$.
If there are any copies of $K_4$ in $G$, remove one and apply induction, then apply Lemma~\ref{lem: pelsmajer}.
So, assume that there are no copies of $K_4$ and that $|V(Q)|\le 3$.
Since $|V(G)| \geq 5$, $m\geq 1$.

Suppose without loss of generality that $|V(R_1)|=\max_i |V(R_i)|$. Let $n=|V(R_i)|$ and label the vertices of $R_1$ in cyclic order $v_1,\ldots,v_n$.  If $n\geq 8$, then we can apply the method used in the proof of Theorem~\ref{thm: cyclepowers} (with $p=2$ and $k=4$ and this $n$): delete a set $S= \{v_1, \ldots, v_8 \}$, apply induction to $G-S$, then use Lemmas~\ref{lem: gpelsmajer} and~\ref{lem: cyclepowers} to extend to $S$.

It remains to consider cases $n=6$ and $n=7$.

\textbf{Case where $n=6$}:  First, we will specify a proper $L$-coloring of $R_1$.
Pick any color $c_1\in L(v_1)$ and let $c(v_1)=c_1$.  Let $L'(v_i)=L(v_i)- \{c_1 \}$ for $2\le i\le 6$.  Since $\chi_l(R_1)=3$, we can find a proper $L'$-coloring of $R_1 - \{v_1\}$ to complete $c$.  Since $\alpha(R_1 - \{v_1\}) = 2$, at most two color classes associated with $c$ have size~2, and the rest of the color classes are of size~1. Let $C$ be the set of colors used twice by $c$ on $R_1$.

Now consider the sub-case where $m \geq 2$. In this case, $|V(R_2)|=6$. Let $u_1, u_2, \ldots, u_6$ be the vertices of $R_2$ in cyclic order.  Note that the only independent sets of size greater than~1 in $R_2$ are $\{u_1,u_4\}$, $\{u_2, u_5\}$, and $\{u_3,u_6\}$.

Now, for each $1\le i\le 3$:
If $L(u_i) \cap C \neq \emptyset$, let $L'(u_i)$ be obtained from $L(u_i)$ by deleting one of the elements in $L(u_i) \cap C$; otherwise, let $L'(u_i)=L(u_i)$.  Next, if $L'(u_i)\cap C \neq \emptyset$ and $L(u_{i+3})$ contains the color in $L'(u_i)\cap C$, remove it from $L(u_{i+3})$ to define $L'(u_{i+3})$; otherwise let $L'(u_{i+3})=L(u_{i+3})$.

By the construction, any proper $L'$-coloring of $R_2$ uses each color in $C$ at most once.  Since $\chi_l(R_2)=3$ and every list $L'(u_i)$ on $R_2$ has size at least~3, there is a proper $L'$-coloring of $R_2$; since $\alpha(R_2)=2$ each color is used at most~2 times on $R_2$.  Combining this with $c$ on $R_1$ we get a proper $L$-coloring of $R_1+R_2$ which uses each color at most~3 times.  If $|V(G)|=6$ we are done; otherwise apply induction and Lemma~\ref{lem: gpelsmajer} to extend the coloring to $G$.

Now consider the sub-case $m=1$.  If $|V(G)|\le 8$ we need a proper $L$-coloring with color classes of size at most~2.  We are done if $G=R_1$.  If $G=R_1+Q$ and $|V(Q)|$ is~1 or~2, we can remove $C$ from lists on $V(Q)$ and then $L$-color $Q$ without repeating a color, which suffices.  Otherwise $|V(Q)|=3$, in which case we just greedily $L$-color $Q$ with distinct colors.  Then each color is used at most~3 times on $G$, which suffices since $\lceil 9/4 \rceil=3$.

\textbf{Case where $n=7$}:  Suppose that $n=7$.  First consider the sub-case where $m \geq 2$.  In this sub-case $6 \leq |V(R_2)| \leq 7$, and suppose that $u_1, u_2, \ldots, u_s$ ($s$ is 6 or 7) are the vertices of $R_2$ in cyclic order.  With the intent of applying Lemma~\ref{lem: gpelsmajer}, let $S = \{v_1, v_2, \ldots, v_7, u_1, u_2, \ldots, u_5 \}$.  By induction there is an equitable $L$-coloring, $c$, of $G-S$, and we let $L'$ be the list assignment for $G[S]$ given by $L'(x) = L(x) - \{c(u) : u \in (N_G(x) - S) \}$ for each $x \in S$.  Now, consider the graph $H=G[u_1, u_2, \ldots, u_5]$.  We claim that there exists a proper $L'$-coloring, $c'$, of $H$ which has at most one color class of size~2 and no color classes of size more than~2.  We will prove this claim when $s=7$ and when $s=6$.

\par

When $s=7$, $H$ is a copy of $P_5^2$.  Note that $|L'(u_i)| \geq 2$ for $i=1,5$, $|L'(u_i)| \geq 3$ for $i=2,4$, and $|L'(u_3)| = 4$ (since $L(u_3)=L'(u_3)$).  We form $c'$ by first coloring $u_4$ with $c_4 \in L'(u_4)$ so that $|L'(u_5) - \{c_4 \}| \geq 2$ (this is possible since $|L'(u_4)| \geq 3$ and $|L'(u_5)| \geq 2$).  Now, let $L''(u_i) = L'(u_i) - \{c_4 \}$ for $i=1,2,3,5$.  For $i=1,2,3$, $|L''(u_i)| \geq i$.  This means we may greedily color $u_1, u_2$, and $u_3$ with distinct colors from $L''$.  Having colored $u_1, u_2, u_3, u_4$ with~4 distinct colors, we may complete $c'$ by coloring $u_5$ with a color in $L''(u_5)$ distinct from the color used on $u_3$.

\par

When $s=6$, note that $H$ is the same as in the case $s=7$ except $u_1$ is adjacent to $u_5$. When $s=6$, $|L'(u_i)| \geq 3$ for $i=1,2,4,5$ and $|L'(u_3)| = 4$ (since $L(u_3)=L'(u_3)$).  We form $c'$ by coloring $u_3$ with $c_3 \in L'(u_3)$ so that $|L'(u_4) - \{c_3 \}| \geq 3$ (this is possible since $|L'(u_3)|=4$ and $|L'(u_4)| \geq 3$).  Now, let $L''(u_i) = L'(u_i) - \{c_3 \}$ for $i=1,2,4,5$.  We notice that $|L''(u_i)| \geq 2$ for $i=1,2,5$ and $|L''(u_4)| \geq 3$.  Also, $H[\{u_1, u_2, u_4, u_5\}]$ is a 4-cycle.  We complete $c'$ by finding a proper $L''$-coloring, $f$, of this~4 cycle.  If $f$ has two color classes of size~2, we recolor $u_4$ with a color not used by $f$ (this is possible since $|L''(u_4)| \geq 3$).  The resulting coloring has the desired property.

\par

Having proven our claim, we now return to the sub-case where $n=7$ and $m \geq 2$.  In order to apply Lemma~\ref{lem: gpelsmajer}, we will show that there is a proper $L'$-coloring of $G[S]$ which has no color class of size more than~3.  To construct such a coloring, we begin with a proper $L'$-coloring, $c'$, of $H$ which has at most one color class of size~2 and no color class of size more than~2.  Suppose that $c_1$ is the color used twice by $c'$, or an arbitrary color used by $c'$ if $c'$ uses no color twice.  Notice that $L(v_i)=L'(v_i)$ for $i=1,2, \ldots 7$.  If $c_1 \notin \cup_{i=1}^7 L'(v_i)$, we find a proper $L'$-coloring, $c''$, of $R_1$ (which is possible since $\chi_l(R_1)=4$).  Note $c''$ never uses the color $c_1$ and no color class associated with $c''$ has size more than $\alpha(R_1) = \lfloor 7/3 \rfloor = 2$.  Thus, if we combine $c'$ and $c''$, we obtain a proper $L'$-coloring of $G[S]$ with no color class of size more than~3.  So, without loss of generality, suppose that $c_1 \in L'(v_1)$.  In this situation we color $v_1$ with $c_1$, and we let $L''(v_i)=L'(v_i)- \{c_1\}$ for $i=2,3, \ldots, 7$.  We note that $R_1 - \{v_1\}$ is a spanning subgraph of a copy of $C_6^2$.  Since $\chi_l(C_6^2)=3$, there exists a proper $L''$-coloring of $R_1 - \{v_1\}$ which uses no color more than $\alpha(C_6^2)=2$ times.  Let $c''$ be the coloring of $R_1$ obtained when we color $v_1$ with $c_1$ and use such an $L''$-coloring of $R_1 - \{v_1\}$ for what remains.  If we combine $c'$ and $c''$, we obtain a proper $L'$-coloring of $G[S]$ with no color class of size more than~3 (since $c_1$ is only used once by $c''$) as desired.

\par

Now, consider the sub-case $m=1$.  Let $c'$ be a proper $L$-coloring of $R_1$, and suppose we color $R_1$ according to $c'$.  Clearly, $c'$ has at most~3 color classes of size~2 and no color class of size more than~2.   Let $C$ consist of the colors that are used more than once by $c'$.  If $|V(G)| \leq 8$, we need a proper $L$-coloring with color classes of size at most~2.  We are done if $G= R_1$.  If $G=R_1+Q$ with $|V(Q)|=1$, we color the remaining vertex with a color that is not in $C$.  Otherwise $|V(Q)|$ is~2 or~3, and we use distinct colors to color the remaining vertices, and each color is used at most 3 times on $G$, which suffices since $\lceil 9/4 \rceil = 3$.
\end{proof}

Notice that if our goal was to only prove Theorem~\ref{thm: totalmain} the above proof could be significantly shortened.  In particular, we would not need to prove the result about equitable 3-choosability, and we could completely eliminate the case where $n=7$ since the total graph of a multigraph with maximum degree 2 does not have any squares of odd cycles as components.  The reason we present the stronger result is that the equitable choosability of the disjoint union of powers of paths and cycles is an interesting topic in its own right.

\end{document}